 \numberwithin{equation}{section}
\newtheorem*{product}{Product Semigroups}
\newtheorem*{similar}{Similar Semigroups}
\newtheorem*{quotient}{Quotient Semigroups}
\newtheorem*{rescaled}{Rescaled Semigroups}
\newtheorem{thm}{Theorem}[section]
\newtheorem{prop}[thm]{Proposition}
\newtheorem{cor}[thm]{Corollary}
\newtheorem{lemma}[thm]{Lemma}
\newtheorem{defn}[thm]{Definition}
\numberwithin{equation}{section}
\theoremstyle{definition}
\newtheorem{rem}[thm]{Remark}
\newtheorem{ex}[thm]{Example}
\def\NN{{\mathbb N}}
\def\ZZ{{\mathbb Z}}
\def\RR{{\mathbb R}}
\def\QQ{{\mathbb Q}}
\def\C{{\textnormal {C}}}
\def\UC{{\textnormal {UC}}}
\def\Lip{{\textnormal {Lip}}}
\def\Id{{\textnormal {Id}}}
\DeclareSymbolFont{bbold}{U}{bbold}{m}{n}
\DeclareSymbolFontAlphabet{\mathbbold}{bbold}
\newcommand{\zs}
\newcommand{\goestru}{\xrightarrow{\tau_{ru}}}
\newcommand{\goesru}{\xrightarrow{ru}}
\DeclareMathOperator{\supp}{supp}
\begin{document}

\title[Relatively uniformly continuous semigroups]{Relatively uniformly continuous semigroups on vector lattices}
\author{M.~Kandi\' c}
\address{Faculty of Mathematics and Physics,
University of Ljubljana,
Jadranska 19,
SI-1000 Ljubljana,
Slovenija}
\address{Institute of Mathematics, Physics and Mechanics,
Jadranska 19,
SI-1000 Ljubljana,
Slovenija}
\email{marko.kandic@fmf.uni-lj.si}
\author{M.~Kaplin}
\address{Institute of Mathematics, Physics and Mechanics,
Jadranska 19,
SI-1000 Ljubljana,
Slovenija}
\address{Faculty of Mathematics and Physics,
University of Ljubljana,
Jadranska 19,
SI-1000 Ljubljana,
Slovenija}

\email{michael.kaplin@fmf.uni-lj.si}

\keywords{Vector lattices, relative uniform convergence, relative uniform topology, relative uniform continuity, strongly continuous semigroups}
\subjclass[2010]{46A40, 47D06, 46B42.}

\thanks{The first author acknowledges financial support from the Slovenian Research Agency, Grants No. P1-0222 and J1-8133}

\date{\today}

\begin{abstract}
In this paper we study continuous semigroups of positive operators on general vector lattices equipped with the relative uniform topology $\tau_{ru}$. We introduce the notions of strong continuity with respect to $\tau_{ru}$ and relative uniform continuity for semigroups. These notions allow us to study semigroups on non-locally convex spaces such as $L^p(\RR)$ for $0<p<1$ and non-complete spaces such as $\Lip(\RR)$, $\UC(\RR)$, and $\C_c(\RR)$. We show that the (left) translation semigroup on the real line, the heat semigroup and some Koopman semigroups are relatively uniformly continuous on a variety of spaces.
%The motivation to study these notions comes from the fact that on Banach lattices a semigroup is strong continuous with respect to $\tau_{ru}$ if and only if it is a $C_0$-semigroup.
%We will develop tools to determine relative uniform continuity of semigroups on vector lattices. 
\end{abstract}

\maketitle
\section{Introduction and preliminaries} 
In the 1940s, E. Hille \cite{Hille:42,Hille:48} and K. Yosida \cite{Yosida:48} introduced the theory of strongly continuous semigroups on Banach spaces in order to treat evolution equations. By now, their theory is well established, and its applications reach well beyond the classical
field of partial differential equations. However, from the very beginning many situations occurred in which the %corresponding semigroup is not strongly continuous or the 
underlying space is not a Banach space. In order to deal with such phenomena, already %E. Hille and R.S. Phillips \cite{Hille:57} introduced a whole range of semigroups on Banach spaces having other continuity properties.On the other hand, 
I. Miyadera \cite{Miyadera:59}, H. Komatsu \cite{Komatsu:64}, K. Yosida \cite{Yosida:65}, K.
Singbal-Vedak \cite{Singbal:65}, T. Komura \cite{Komura:68}, S. Ouchi \cite{Ouchi:73} and others generalized the
theory to strongly continuous semigroups on locally convex spaces.
Also, strongly continuous semigroups of positive operators on Banach lattices have
been discussed in \cite{Batty:84}, \cite{Arendt:86}, \cite{Batkai:17} and by others.

Our purpose is to provide a general framework for the theory of strongly continuous semigroups on vector lattices. Although vector lattices themselves are initially order and algebraic theoretical construct, they admit topologies which arise purely from order. The natural question that appears is whether one can study dynamical systems on general vector lattices. Since we want that our notion of strong continuity of semigroups on general vector lattices agrees with strong continuity for semigroups on Banach lattices, relative uniform topology $\tau_{ru}$ seems to be the correct choice.  This allows us to consider semigroups on non-Banach spaces, such as $\C_c(\RR), \Lip(\RR)$, $\UC(\RR)$, and $\C(\RR)$ or even on non-locally convex spaces such as $L^p(\RR)$ for $0<p<1$. We discuss two types of continuity of semigroups on vector lattices:  the strong continuity with respect to topology $\tau_{ru}$ and with respect to relative uniform convergence. The former notion is defined by $\tau_{ru}$-convergence and the latter is defined by relative uniform convergence.

%
%
%On vector lattices there are several topologies which are determined by the order structure. there is a positive answer to this question. For this we consider the relative uniform topology $\tau_{ru}$ which can be defined on any vector lattice and identified on important classes of vector lattices. %Vector lattices equipped with $\tau_{ru}$ are not necessarily locally convex or complete.

%The semigroup theory is convenient to study partial differential equations, but so far it is restricted to complete locally convex spaces such as Banach spaces.
%In this paper we generalize certain results in the theory of semigroups of continuous linear operators on a Banach space to the case where, instead of a Banach space, we consider general vector lattices equipped with the relative uniform topology.
%The theory of strongly continuous semigroups is a convenient tool to study partial differential equations. It is therefore important to consider semigroups on a huge variety of spaces. So far this theory is restricted to complete locally convex vector spaces.
%In this paper we establish the framework to study semigroups of linear operators on general vector lattices equipped with the relative uniform topology.

This paper is structured as follows.
In \Cref{section RUC} we consider some general properties of $\tau_{ru}$ and we provide examples of  vector lattices together with corresponding relative uniform topologies.
%We compare results on relative uniform topology that hold for sequences with general results that hold or fail for nets.
In \Cref{section:tru semigroups} we introduce the notions of $\tau_{ru}$-strongly continuous semigroups and relatively uniformly continuous semigroups. We will see that the heat semigroup is relatively uniformly continous on $\Lip(\RR^N)$ and $\UC(\RR^N)$ for each $N \in \NN$. While relative uniform continuity implies $\tau_{ru}$-strong continuity, in general, these notions do not coincide as it is shown by the (left) translation semigroup on $L^p(\RR)$ $(0<p<\infty)$. In the rest of \Cref{section:tru semigroups} we present how one can lift strong continuity with respect to $ \tau_{ru}$ from a $ \tau_{ru}$-dense set to the whole space. Parallel theory for relatively uniformly continuous semigroups is considered in \Cref{ru BS}. A comparison between vector lattice case and Banach space case reveals that general vector lattices lack some property related to the principle of uniform boundedness. We introduce such property and call it ``property $(D)$". We prove that many important vector lattices posses it. This property enables us to provide the extension theorem for relatively uniformly continuous semigroups. This theorem is also fundamental for the proof of \cite[Theorem 5.4]{Kaplin:18} which is a Hille-Yosida type result for relatively uniformly continuous semigroups. In \Cref{Koopman} this extension theorem is used to identify relatively uniformly continuous Koopman semigroups on $\C(\RR)$ through their semiflows. Such semigroups have been studied by \cite{Koopman:31}, \cite{Koopman:42}, \cite{Ambrose:42}, \cite{Budisic:12},  \cite{terElst:15},\cite{Mauroy:16}, \cite{Batkai:17}, \cite{Edeko:18} and by others.
In \Cref{constructions} we study standard constructions of new relatively uniformly continuous semigroups from a given one.
%In \Cref{ru BS} we develop a theory how one can lift strong continuity with respect to $ \tau_{ru}$ from a $ \tau_{ru}$-dense set to the whole space.
%
%a Banach-Steinhaus result is stated, which holds true on some important classes of vector lattices. It is used to obtain \Cref{ruc on dense to ruc on whole space}, which is the an analogous version of \Cref{str cont on dense subset} for relatively uniformly continuous semigroups.
%Throughout this paper the (left) translation semigroup is considered on different vector lattices, providing examples for the discussed phenomena.

%If $\tau$ is a topology on the vector space $X$, then we say that $\tau$ is a linear topology on $X$ when $(X,\tau)$ is a topological vector space. A \emph{solid set} $S$ in $X$ is a set such that for each $y \in S$ the condition $|x| \leq |y|$ implies $x \in S$.
%By $L^p(\RR)$ we denote the p-integrable functions on $\RR$ with the Lebesgue measure $dx$.

%We have $x \sim x_0$ if and only if $x - x_0 \in J$, i.e., $x \sim x_0$ if an only if $\pi(x) = \pi(x_0)$. We introduce a relation
%$\geq$ on $X / J$ by letting $\pi(x) \geq \pi(y)$ if and only if there exist $x_1 \sim x$ and $y_1 \sim y$ such that $x_1 \geq y_1$.

%%%

Let us now recall some preliminary facts and notations that are needed throughout the text.
A family $(T(t))_{t\geq 0}$ of linear operators on a vector space $Y$ is a \emph{semigroup} if it satisfies the functional equation $$T(s+t)=T(t)T(s) \quad \text{for all } t,s\geq 0 \qquad \text{and} \qquad T(0)=I_Y.$$ If $Y$ is a Banach space, then we call a semigroup $(T(t))_{t\geq 0}$ a \emph{$C_0$-semigroup} or \emph{strongly continuous} on $Y$ when the operator $T(t)$ is bounded for each $t \geq 0$ and for each $y \in Y$ the \emph{orbit map} $$\zeta_y\colon t \mapsto \zeta_y(t)=T(t)y$$ is continuous with respect to the Euclidean topology $\tau_e$ on $\RR_+$ and the norm topology on $Y$.
If $Y$ is a vector lattice and $T(t)$ is a positive operator on $Y$ for each $t \geq 0$, then the semigroup $(T(t))_{t\geq 0}$ is called a \emph{positive semigroup}.

If $\tau$ is a linear topology on $Y$, then a net of linear operators $(T_\alpha)_{\alpha}$ is \emph{$\tau$-equicontinuous} when for each $\tau$-neighborhood of zero $V \subset Y$ there exists another $\tau$-neighborhood of zero $U\subset Y$ such that $T_\alpha U \subset V$ for all $\alpha$.  If for each $s>0$ the family of operators  $\{T (t)  \ \colon \ 0 \leq t \leq s \}$ is $\tau$-equicontinuous, then $(T(t))_{t\geq 0}$ is \emph{locally $\tau$-equicontinuous}.
%Definition from "Continuity and equicontinuity of semigroups on norming dual pairs" by Markus Kunze.
If for $x \in X$, $s>0$ and a topology $\tau$ on $X$ we have $T(s+h)x \xrightarrow{\tau} T(s)x$ as $h \rightarrow 0$  and $T(h)x \xrightarrow{\tau} x $ as $h \searrow 0$, then we write ``$T(s+h)x \xrightarrow{\tau} T(s)x$ as $h \rightarrow 0$ for $s \geq 0$".

A net $(x_\alpha)_{\alpha} \subset X$ is \emph{relatively uniformly convergent to $x \in X$} if there exists some $u\in X$ such that for each $\varepsilon>0$ there exists $\alpha_0 $ such that $$|x_\alpha-x|\leq \varepsilon \cdot u$$ holds for all $\alpha\geq \alpha_0$. We call such an element $u\in X$ a \emph{regulator of $(x_\alpha)_{\alpha} $} and we write $x_\alpha \goesru x$. It is well-known that limits of relatively uniformly convergent sequences in $X$ are unique if and only if $X$ is Archimedean.
Throughout this paper, $X$ stands for an Archimedean vector lattice unless specified otherwise.

For the unexplained terminology about vector lattices and semigroups we refer the reader to \cite{Luxemburg:71} and \cite{Engel:00}, respectively.

%\begin{thm}
%Let $X$ be a normed lattice and $\widehat X$ its norm completion. Let $(T(t))_{t\geq 0}$ be an relatively uniformly continuous semigroup of positive bounded operators on $X$. Suppose for each $x\in X_+$ there are $t_0>0$ and $u\in X_+$ such that $\{T(t)x:\; 0\leq t\leq t_0\}\subseteq [0,u].$ Then
%there exists a unique extension of $(T(t))_{t\geq 0}$ to an relatively uniformly continuous semigroup  positive semigroup on $\widehat X$.
%\end{thm}
%
%\begin{proof}
%Since $T(t)$ is bounded on $X$, it has a unique bounded extension $S(t)$ on $\widehat X$ with $\|S(t)\|=\|T(t)\|.$ Since $T(t)$ is positive and $X$ is a dense sublattice in $\widehat T$, $S(t)$ is positive as well.
%Since $S(t_1t_2)|_X=T(t_1t_2)=T(t_1)T(t_2)=S(t_1)|_X S(t_2)|_X$ and $X$ is dense in $\widehat X$ we have that $S(t)_{t\geq 0}$ is a semigroup on $\widehat X$.
%
%We claim that $(S(t))_{t\geq 0}$ is relatively uniformly continuous semigroup on $\widehat X$.
%To this end, choose $x\geq 0$ in $\widehat X$ and $\varepsilon>0$. There is $y\geq 0$ in $X$ such that $\|x-y\|<\varepsilon.$
%\begin{align*}
%|T(t)x-x|&\leq |T(t)(x-y)|+|T(t)y-y|+|x-y|.
%\end{align*}
%\end{proof}

\section{Relative uniform topology}\label{section RUC}

A subset $S$ of $X$ is called \emph{relatively uniformly closed}
whenever $(x_n)_{n \in \NN} \subset S$ and $x_n \goesru x$ imply $x \in S$. By \cite[Section 3]{Luxemburg:67}, the relatively  uniformly closed sets are exactly the closed sets of a certain topology in $X$, the \emph{relative uniform topology} which we denote by $\tau_{ru}$. This topology has been first studied by W.A.J. Luxemburg and L.C. Moore in \cite{Luxemburg:67}; see also \cite{Moore:68}. When a net $(x_\alpha)_{\alpha} \subset X$ converges to $x$ in $\tau_{ru}$, we write $x_\alpha \xrightarrow{\tau_{ru}} x$. Since $X$ is Archimedean, the topological space  $(X, \tau_{ru}) $ satisfies the $T_1$-separation axiom.

The following proposition yields that if one starts by defining closed sets through nets, one ends up with the same topology.

\begin{prop}\label{nets and sequences}
A subset $S$ of $X$ is relatively uniformly closed if and only if for each net $(x_\alpha)_{\alpha} \subset S$ and $x \in X$ with $x_\alpha \goesru x$ we have $x \in S$.
\end{prop}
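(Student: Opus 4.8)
The plan is to prove the two implications separately, and the point worth flagging at the outset is which of them carries the content. The implication that the net condition forces $S$ to be relatively uniformly closed is immediate: every sequence is in particular a net, so a set closed under relative uniform limits of arbitrary nets is \emph{a fortiori} closed under relative uniform limits of sequences, which is exactly the definition of relative uniform closedness. Thus the substance of the proposition is the reverse implication, and that is where I would spend the work.

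For the reverse implication, suppose $S$ is relatively uniformly closed (in the sequential sense of the definition) and let $(x_\alpha)_\alpha \subset S$ be a net with $x_\alpha \goesru x$. Unwinding the definition, there is a \emph{single} regulator $u \in X$ such that for every $\varepsilon > 0$ there is an index $\alpha_0$ with $|x_\alpha - x| \leq \varepsilon \cdot u$ for all $\alpha \geq \alpha_0$. The key observation is that this one regulator lets me sample the net along a countable sequence of tolerances: for each $n \in \NN$ I apply the definition with $\varepsilon = 1/n$ to obtain an index $\alpha_n$, and in particular $|x_{\alpha_n} - x| \leq \frac{1}{n}\, u$.

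I would then verify that the sampled sequence $(x_{\alpha_n})_{n \in \NN} \subset S$ converges relatively uniformly to $x$ with the same regulator $u$: given $\varepsilon > 0$, choosing $N \geq 1/\varepsilon$ gives $|x_{\alpha_n} - x| \leq \frac{1}{n}\, u \leq \varepsilon \cdot u$ for all $n \geq N$, so $x_{\alpha_n} \goesru x$. Since $(x_{\alpha_n})_n$ is a \emph{sequence} in $S$ relatively uniformly convergent to $x$ and $S$ is relatively uniformly closed, we conclude $x \in S$, which establishes the net condition.

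I do not expect a serious obstacle here; the one conceptual point to emphasize is that relative uniform convergence of a net is governed by a single regulator, and it is precisely this uniformity that collapses the net to a sequence. Notably, this step needs neither first countability nor sequentiality of $\tau_{ru}$ (which one should not expect in general), and it requires no special structure on the index set: I neither need the sampled indices $\alpha_n$ to be monotone nor to invoke directedness, since the common regulator $u$ does all the work.
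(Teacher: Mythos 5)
Your proof is correct and follows essentially the same route as the paper: the easy direction is dispatched by noting sequences are nets, and the substantive direction extracts a sequence $(x_{\alpha_n})_n$ by sampling the net at tolerances $\varepsilon = 1/n$ with the single regulator $u$, then applies sequential closedness. Your added remarks on why the single regulator is the crux are accurate but not needed beyond what the paper records.
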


\begin{proof}%[Proof of \Cref{nets and sequences}]
It suffices to prove the ``only if" statement. Fix a relatively uniformly closed set $S \subset X$, $x \in X$ and a net $(x_\alpha)_{\alpha} \subset S$ satisfying $x_\alpha \goesru x$ with respect to some regulator $u$. We show that $x \in S$.
For each $n\in\mathbb N$ pick any index $\alpha_n$ such that $|x_{\alpha_n}-x|\leq \frac{1}{n}\cdot u$. Then $x_{\alpha_n}\goesru x$, and since $S$ is relatively uniformly closed, we conclude $x\in S$.
\end{proof}

We proceed with various examples of important vector lattices together with their relative uniform topologies and convergences which will be needed throughout the paper.

\begin{ex}\label{topology examples}\hfill
\begin{enumerate}[(a)]
\item On a vector lattice $X$ with an order unit $u \in X$ the relative uniform topology $\tau_{ru}$ is generated by the norm $$\|x\|_{u}:= \inf \{ \lambda >0 \ \colon \ |x| \leq \lambda \cdot u \},$$
since $x_\alpha \goesru x$ if and only if $x_\alpha \xrightarrow{\|\cdot\|_{u}} x$. Such vector lattices are $\tau_{ru}$-complete if and only if they are uniformly complete.
\item It is well-known that in a completely metrizable locally solid vector lattice $(X,\tau)$ every convergent sequence has a subsequence which converges relatively uniformly to the same limit, see \cite[Lemma 2.30]{Aliprantis:07}. This immediately yields that a subset of $X$ is relatively uniformly closed if and only if it is $\tau$-closed, so that topologies $\tau_{ru}$ and $\tau$ agree. In particular, if $X$ is a Banach lattice, then $\tau_{ru}$ agrees with norm topology.
%\item As a special case of (a) we conclude that on Banach lattices the norm topology agrees with $\tau_{ru}$.
\item For $0<p<1$ the vector lattice $L^p(\RR)$ equipped with the topology $\tau$ induced by the metric $$\qquad d_p(f_1,f_2):=\int_{\RR} |f_1(x)-f_2(x)|^p \ dx$$
    %\quad \text{ for each } f_1, f_2 \in L^p(\RR)$$
    is a completely metrizable locally solid vector lattice which is not locally convex.
\item The vector lattice $C(\RR)$ equipped with the topology of uniform convergence on compact sets is a completely metrizable locally convex solid vector lattice.
\end{enumerate}
\end{ex}

In the following proposition we characterize relative uniform convergence in $\C_c(\RR)$.
\begin{prop}\label{compact support ru}
A net $(f_\alpha)_{\alpha} \subset \C_c(\RR)$ converges relatively uniformly to $f \in \C_c(\RR)$ if and only if $f_\alpha \xrightarrow{\| \cdot \|_{\infty}} f$ and there exists a compact set $K \subset \RR$ and $\alpha_0$ such that ${f_{\alpha}}|_{K^c}=0$ for all $\alpha \geq \alpha_0$.
\end{prop}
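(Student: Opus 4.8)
The plan is to prove both implications directly from the definition of relative uniform convergence, extracting in the forward direction the uniform convergence and the common support, and in the converse direction constructing an explicit regulator.

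For the forward implication, I would start from a regulator $u \in \C_c(\RR)$ witnessing $f_\alpha \goesru f$. Uniform convergence is immediate: given $\varepsilon > 0$, choosing $\alpha_0$ so that $|f_\alpha - f| \leq \varepsilon \cdot u$ for $\alpha \geq \alpha_0$ yields $\norm{f_\alpha - f}_\infty \leq \varepsilon \norm{u}_\infty$, and since $\norm{u}_\infty$ is a fixed finite constant this forces $f_\alpha \xrightarrow{\norm{\cdot}_\infty} f$. For the common support I would set $K := \supp u \cup \supp f$, which is compact as a union of two compact sets, and fix $\alpha_0$ corresponding to $\varepsilon = 1$. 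On $K^c$ both $u$ and $f$ vanish, so for $\alpha \geq \alpha_0$ we get $|f_\alpha| = |f_\alpha - f| \leq u = 0$ there; hence $f_\alpha|_{K^c} = 0$.

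For the converse, the key observation is that the support hypothesis together with uniform (hence pointwise) convergence forces $f$ itself to vanish off $K$: for $x \in K^c$ we have $f(x) = \lim_\alpha f_\alpha(x) = 0$, so $\supp f \subseteq K$. Thus for $\alpha \geq \alpha_0$ both $f$ and $f_\alpha$, and therefore $f_\alpha - f$, are supported in $K$. I would then fix a single regulator by choosing, via Urysohn's lemma, a function $u \in \C_c(\RR)$ with $0 \leq u$ and $u \equiv 1$ on $K$. This gives the pointwise bound $|f_\alpha - f| \leq \norm{f_\alpha - f}_\infty \cdot u$ for all $\alpha \geq \alpha_0$ (trivially on $K^c$, and on $K$ because $u = 1$ there). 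Given $\varepsilon > 0$, uniform convergence supplies $\alpha_1 \geq \alpha_0$ with $\norm{f_\alpha - f}_\infty \leq \varepsilon$ for $\alpha \geq \alpha_1$, whence $|f_\alpha - f| \leq \varepsilon \cdot u$ and $f_\alpha \goesru f$ with regulator $u$.

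The routine steps are the estimates; the only genuinely structural point---and the one I expect to require the most care---is the converse, where one must manufacture a \emph{single} regulator valid for the whole net. The support hypothesis is exactly what makes this possible: it confines all the tails $f_\alpha - f$ to one fixed compact set $K$, allowing the bump function $u$ to absorb the uniform error uniformly in $\alpha$. Without the common-support condition, uniform convergence alone would not produce a regulator, which is precisely why both conditions appear on the right-hand side.
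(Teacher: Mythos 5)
Your proof is correct and follows essentially the same route as the paper: extract uniform convergence from the regulator, read off the common compact support from the bound $|f_\alpha|\leq|f_\alpha-f|+|f|$, and conversely build a regulator from a bump function equal to $1$ on a compact set containing all the supports. The only cosmetic difference is that you deduce $\supp f\subseteq K$ from pointwise convergence, whereas the paper simply enlarges $K$ to $K_1\cup K_2$ so as to contain $\supp f$ from the outset; both work equally well.
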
 
\begin{proof}
($\Rightarrow$) Fix $\varepsilon >0$. There exist $u \in \C_c(\RR)$, independent of $\varepsilon$, and $\alpha_0$ such that
$$|f_\alpha -f| \leq \varepsilon \cdot u$$
for all $\alpha \geq \alpha_0$. This immediately implies that $$\| f_\alpha -f \|_{\infty} \leq  \varepsilon \cdot \| u \|_{\infty}\quad \text{ and } \quad| f_\alpha | \leq |f_\alpha -f| + |f| \leq \varepsilon \cdot u + |f|$$
for all $\alpha \geq \alpha_0$ and hence, $f_\alpha \xrightarrow{\| \cdot \|_{\infty}} f$ and ${f_{\alpha}}|_{K^c}=0$ for all $\alpha \geq \alpha_0$ where $K$ is the compact support of the function $\varepsilon \cdot u+|f|$. 

($\Leftarrow$) To construct a regulator $u \in \C_c(\RR)$, pick compact sets $K_1, K_2 \subset \RR$ and $\alpha_0$ such that $f_{|{K_1}^c}=0$ and ${f_{\alpha}}_{|{K_2}^c}=0$ hold for all $\alpha \geq \alpha_0$ and set $K:=K_1 \cup K_2$. Then for all $\alpha \geq \alpha_0$ we obtain 
$$(|f_{\alpha}-f|)_{|K^c} \leq (|f_{\alpha}|+|f|)_{|K^c}=0.$$ By assumption, for each $\varepsilon >0$ there exists $\alpha_1$ such that $\|f_\alpha -f\|_\infty \leq \varepsilon $
holds for all $\alpha \geq \alpha_1$. Hence, for any $\alpha \geq \alpha_0, \alpha_1$ we have
$$
|f_\alpha (x)-f(x)|\leq\left\{\begin{array}{lcl}
\varepsilon,& x \in  K,\\
0  ,& x \in K^c.
\end{array}\right.
$$
Now it is easy to see that any positive function $u \in \C_c(\RR)$  with $u(x)=1$ for all $x \in K$ regulates the convergence $f_\alpha \goesru f$. 
\end{proof} 

By \Cref{compact support ru}, a set $S \subset \C_c(\RR)$ is relatively uniformly closed if and only if for $(f_n)_{n \in \NN} \subset S$ and $f \in \C_c(\RR)$ the existence of a compact set $K \subset \RR$ such that ${f_{n}}|_{K^c}=0$ for all $n \in \NN$ and $f_n \xrightarrow{\| \cdot \|_{\infty}} f$ imply $f \in S$.

If a vector lattice $X$ has an order unit  $u$, \Cref{topology examples}(a) yields that $\tau_{ru}$ on $X$ agrees with the norm topology induced by the norm $\|\cdot\|_u$.
The following proposition shows that for each $N \in \NN$ the vector lattices of Lipschitz continuous functions $\Lip(\RR^N)$ and uniformly continuous functions $\UC(\RR^N)$ posses order units.

\begin{lemma}\label{1}
For each $f \in \UC(\RR^N)$ and each $\varepsilon>0$ there exists $\delta > 0$ such that \begin{equation}\label{delta}
|f(x)-f(y)| \leq \varepsilon \cdot  (\|x-y\| \cdot \delta^{-1} +1)
\end{equation}
holds for all $x,y \in \RR^N$. In particular, the function $u  \colon  x \mapsto 1+\|x\|$ is an order unit of vector lattices $\Lip(\RR^N)$ and $\UC(\RR^N)$.
\end{lemma}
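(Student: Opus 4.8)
The plan is to obtain the inequality \eqref{delta} directly from the definition of uniform continuity via a chaining argument along the segment joining $x$ and $y$, and then to deduce the order unit property by specializing to $y = 0$.

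First I would fix $f \in \UC(\RR^N)$ and $\varepsilon > 0$, and use uniform continuity to produce $\delta > 0$ such that $\|x - y\| \leq \delta$ implies $|f(x) - f(y)| \leq \varepsilon$. For arbitrary $x, y \in \RR^N$ with $x \neq y$, I set $n := \lceil \|x-y\| \cdot \delta^{-1} \rceil$ and subdivide the segment from $x$ to $y$ into $n$ equal pieces by the points $z_k := x + \tfrac{k}{n}(y - x)$ for $k = 0, \dots, n$, so that $z_0 = x$, $z_n = y$ and each step satisfies $\|z_{k-1} - z_k\| = \|x - y\|/n \leq \delta$. Applying the $\delta$-estimate to each consecutive pair and summing via the triangle inequality gives $|f(x) - f(y)| \leq n\varepsilon$. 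Since $\lceil a \rceil \leq a + 1$ for every real $a$, we have $n \leq \|x - y\| \cdot \delta^{-1} + 1$, which yields \eqref{delta}; the case $x = y$ is trivial.

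Next I would derive the order unit property. The function $u(x) = 1 + \|x\|$ is Lipschitz by the reverse triangle inequality $\bigl| \|x\| - \|y\| \bigr| \leq \|x - y\|$, hence lies in both $\Lip(\RR^N)$ and $\UC(\RR^N)$, and is clearly positive. For $f \in \UC(\RR^N)$ I apply \eqref{delta} with $\varepsilon = 1$ and the associated $\delta$, taking $y = 0$, to get $|f(x)| \leq |f(0)| + \|x\| \cdot \delta^{-1} + 1$ for all $x$. Choosing $\lambda := \max\{ |f(0)| + 1, \delta^{-1} \}$, one checks $|f(x)| \leq \lambda(1 + \|x\|) = \lambda \cdot u(x)$, so $|f| \leq \lambda \cdot u$ and $u$ is an order unit of $\UC(\RR^N)$. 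Since $\Lip(\RR^N) \subset \UC(\RR^N)$ and $u \in \Lip(\RR^N)$, the same bound shows $u$ is an order unit of $\Lip(\RR^N)$ as well; alternatively, for Lipschitz $f$ with constant $L$ one has $|f(x)| \leq |f(0)| + L\|x\| \leq \max\{ |f(0)|, L \}(1 + \|x\|)$ directly.

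The only delicate point is the chaining step: the whole estimate hinges on covering a segment of length $\|x - y\|$ by $\lceil \|x - y\| \cdot \delta^{-1} \rceil$ steps of size at most $\delta$, and on controlling this ceiling linearly by $\|x - y\| \cdot \delta^{-1} + 1$. Everything else reduces to the triangle inequality and an appropriate choice of comparison constant.
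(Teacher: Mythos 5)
Your proposal is correct and follows essentially the same route as the paper: a chaining argument along the segment from $x$ to $y$ using steps of length at most $\delta$, followed by specializing to $y=0$, $\varepsilon=1$ to exhibit the order unit bound. The only cosmetic difference is that you subdivide into $\lceil \|x-y\|\cdot\delta^{-1}\rceil$ equal pieces while the paper writes $\|x-y\| = M\delta + r$ and chains $M+1$ steps; both yield the same estimate $\varepsilon\cdot(\|x-y\|\cdot\delta^{-1}+1)$.
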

\begin{proof}
To prove this, fix $f \in \UC(\RR^N), \varepsilon>0$ and find $\delta >0$ such that $|f(t)-f(s)| \leq \varepsilon$ whenever $\|t-s\| \leq \delta$. Pick $x,y \in \RR^N$ and set $z=\frac{x-y}{\|x-y\|}$. If $\|x-y\| \leq \delta$, then (\ref{delta}) holds. Assume now that $\|x-y\| > \delta$ and write $\|x-y\|= M \delta +r$ for some $M \in \NN$ and $0 \leq r < \delta$. Then
\begin{align*}\label{ucf}
|f(x)-f(y)| &\leq \sum_{n=1}^{M}|f\left( (n\delta+r) \cdot z +y\right)-f \left(((n-1)\delta+r ) \cdot z +y \right)| + |f(r \cdot z+y)-f(y)| \\
& \leq\varepsilon \cdot ( M+1) \leq \varepsilon \cdot (\|x-y \| \cdot \delta^{-1} +1).
\end{align*}
In particular, when $y =0$ and $\varepsilon =1$ we obtain
\begin{equation*}
|f(x)| \leq |f(x)-f(0)| +|f(0)| \leq  1+\|x\|\cdot \delta^{-1}  +|f(0)|\leq  ( 1+\delta^{-1}+|f(0)|) \cdot (1+ \|x\|),
\end{equation*}
 so that $u$ is a order unit for $\Lip(\RR^N)$  and $\UC(\RR^N)$.
%For $x \in \RR$ find $N \in \ZZ$ and $0 \leq r < \delta$ such that $x= N\cdot \delta +r$. Then
%\begin{align*}|f(x)| &\leq \sum_{n=1}^{|N|}|f(\text{sign}(N)n\cdot \delta +r)-f(\text{sign}(N)(n-1)\cdot \delta +r)|\\
%&+ |f(r)-f(0)| +|f(0)| \leq |N|+1+|f(0)| \\
%&\leq \delta^{-1}|N \cdot \delta +r| +2+|f(0)| \leq \lambda \cdot (1+ |x|), \end{align*}
% where $\lambda:=\max(\delta^{-1},2+|f(0)|)$. This proves the result.
\end{proof}

It is well-known that $x_n\goesru x$ implies $x_n\xrightarrow{\tau_{ru}}x$, see \cite[Section 3]{Luxemburg:67}.
%
%\begin{prop}\label{gh}
%If $x_\alpha \goesru x$, then $x_\alpha\xrightarrow{\tau_{ru}} x.$
%\end{prop}
%
%\begin{proof}
%Fix an open $\tau_{ru}$-neighborhood $U \subset X$ for $x$ and $(x_\alpha)_{\alpha} \subset X$ with $x_\alpha \goesru x$ with respect to a regulator $u \in X$. We claim that there exists $n \in \NN$ such that $|x_\alpha -x| \leq \frac{1}{n}\cdot u$ implies $x_\alpha \in U$. Assume otherwise. Then for each $n \in \NN$ there exists $\alpha_n$ such that $x_{\alpha_n} \not \in U$ and $|x_{\alpha_n} -x| \leq \frac{1}{n}\cdot u$. Then $x_{\alpha_n} \goesru x$, and so $x_{\alpha_n} \goestru x$ which is a contradiction to $x_{\alpha_n} \not \in U$ for all $n \in \NN$. Hence, there exists $n \in \NN$ such that $|x_\alpha -x| \leq \frac{1}{n}\cdot u$ implies $x_\alpha \in U$. Since $x_\alpha \goesru x$, there exists $\alpha_0$ such that $|x_\alpha-x| \leq \frac{1}{n} \cdot u$ holds for all $\alpha \geq \alpha_0$ and hence, we have $x_{\alpha} \in U$ for all $\alpha \geq \alpha_0$.
%\end{proof}
While in general, the backward implication is not true, for sequences $\tau_{ru}$ convergence is equivalent to the following.
A sequence $(x_n)_{n \in \NN} \subset X$ is \emph{relatively uniformly $*$-convergent} to $x$ if every subsequence of $(x_n)_{n \in \NN} $ contains a further subsequence that is relatively uniformly convergent to $x$.
%\begin{equation}\label{r.u.* equiv tau}x_n \xrightarrow{ru$^*$} x \quad \iff \quad x_n \goestru x.\end{equation}
Similarly, a net $(x_\alpha)_{\alpha} \subset X$ is \emph{relatively uniformly $*$-convergent to $x \in X$} if every subnet of $(x_\alpha)_{\alpha} $ contains a further subnet that is relatively uniformly convergent to $x$. We write $x_\alpha \xrightarrow{ru^*} x$ if a net or a sequence $(x_\alpha)$ relatively uniformly $*$-converges to $x$.

It is clear that relative uniform convergence implies relative uniform $*$-convergence in case of sequences and nets.  Moreover, relative uniform $*$-convergence is tightly connected to $\tau_{ru}$-convergence. By \cite[Theorem 3.5]{Luxemburg:67}, $x_n \xrightarrow{ru^*} x$ is equivalent to $x_n \goestru x$. The natural question that appears here is what happens when one replaces sequences by nets. The following proposition shows that relative uniform $*$-convergence always implies $\tau_{ru}$-convergence.  On the other hand, \Cref{first uncountable ordinal} will show that the converse implication, in general, is not true.

\begin{prop}\label{sfjihfusdhif}
If $x_\alpha \xrightarrow{ru^*} x$, then $ x_\alpha \goestru x.$
\end{prop}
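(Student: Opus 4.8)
The plan is to combine the subnet characterization of convergence in an arbitrary topological space with the net-level description of relatively uniformly closed sets provided by \Cref{nets and sequences}. Recall that in any topological space a net converges to a point precisely when every subnet admits a further subnet converging to that point; since $x_\alpha \xrightarrow{ru^*} x$ asserts that every subnet has a further subnet that is \emph{relatively uniformly} convergent to $x$, the work reduces to knowing that relative uniform convergence of a net entails $\tau_{ru}$-convergence. I would therefore organize the argument as a single contradiction that folds both ingredients together, rather than isolating the net-level implication $x_\alpha \goesru x \Rightarrow x_\alpha \goestru x$ as a separate lemma.

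First I would assume, for contradiction, that $x_\alpha$ does not $\tau_{ru}$-converge to $x$. Then there is a $\tau_{ru}$-open neighborhood $U$ of $x$ such that $(x_\alpha)_\alpha$ is frequently in the complement $U^c$; that is, the index set $\{\alpha \ \colon \ x_\alpha \in U^c\}$ is cofinal. Passing to this cofinal set yields a subnet $(x_{\alpha_\beta})_\beta$ all of whose terms lie in $U^c$.

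Next I would invoke the hypothesis $x_\alpha \xrightarrow{ru^*} x$ applied to this subnet: it produces a further subnet $(x_{\alpha_{\beta_\gamma}})_\gamma$ with $x_{\alpha_{\beta_\gamma}} \goesru x$. This further subnet still lies entirely in $U^c$, and $U^c$ is $\tau_{ru}$-closed, hence relatively uniformly closed. By \Cref{nets and sequences}, a relatively uniformly closed set contains the relative uniform limits of its nets, so $x \in U^c$. This contradicts $x \in U$, completing the proof.

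The only genuinely delicate point is the topological bookkeeping with subnets: one must check that ``frequently in $U^c$'' really yields a subnet contained in $U^c$ (this is the standard cofinal-subnet construction) and that the relative uniform convergence of the first subnet passes to the further subnet, which it does with the same regulator since a subnet of a relatively uniformly convergent net converges relatively uniformly to the same limit. Everything else is a direct application of \Cref{nets and sequences}, and, as noted above, the net version of the implication $x_\alpha \goesru x \Rightarrow x_\alpha \goestru x$ is obtained for free as a special case of the contradiction argument.
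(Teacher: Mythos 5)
Your argument is correct, but it reaches the conclusion by a genuinely different route than the paper. The paper splits the proof into two stages: it first establishes the special case that $x_\alpha \goesru x$ implies $x_\alpha \goestru x$ for nets (via a claim that some $\frac{1}{n}\cdot u$-ball around $x$ is contained in a given open neighborhood $U$, proved by extracting a sequence and invoking the known sequential result), and only then runs the contradiction argument with subnets, applying the special case to the further subnet. You instead collapse everything into one contradiction and, at the crucial moment, appeal to \Cref{nets and sequences}: the further subnet produced by the $ru^*$-hypothesis lies in the $\tau_{ru}$-closed (hence relatively uniformly closed) set $U^c$ and converges relatively uniformly to $x$, so $x \in U^c$ — contradiction. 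This is a clean reuse of a result already in the paper, and it does make the net-level implication $x_\alpha \goesru x \Rightarrow x_\alpha \goestru x$ fall out as a corollary (every subnet of an $ru$-convergent net is $ru$-convergent with the same regulator, so $ru$-convergence implies $ru^*$-convergence). What the paper's longer route buys is a self-contained quantitative statement (the neighborhood $U$ absorbs an entire order-ball $\{y : |y-x|\le \frac{1}{n}u\}$), which is of some independent interest; what yours buys is brevity and a clearer dependence structure. One small inaccuracy in your closing remarks: you speak of ``the relative uniform convergence of the first subnet'' passing to the further subnet, but the first subnet (the one sitting in $U^c$) is not assumed to converge relatively uniformly — the $ru^*$-hypothesis hands you the relatively uniformly convergent further subnet directly, so no such transfer is needed there. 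This does not affect the validity of the main argument.
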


\begin{proof}
We first consider the special case when $x_\alpha\xrightarrow{ru}x.$
Fix an open $\tau_{ru}$-neighborhood $U \subset X$ for $x$ and $(x_\alpha)_{\alpha} \subset X$ with $x_\alpha \goesru x$ with respect to a regulator $u \in X$.

We claim that there exists $n \in \NN$ such that $|x_\alpha -x| \leq \frac{1}{n}\cdot u$ implies $x_\alpha \in U$. Assume otherwise. Then for each $n \in \NN$ there exists $\alpha_n$ such that $x_{\alpha_n} \not \in U$ and $|x_{\alpha_n} -x| \leq \frac{1}{n}\cdot u$. From $x_{\alpha_n} \goesru x$ we conclude $x_{\alpha_n} \goestru x$ which is a contradiction to $x_{\alpha_n} \not \in U$ for all $n \in \NN$. Hence, there exists $n \in \NN$ such that $|x_\alpha -x| \leq \frac{1}{n}\cdot u$ implies $x_\alpha \in U$. Since $x_\alpha \goesru x$, there exists $\alpha_0$ such that $|x_\alpha-x| \leq \frac{1}{n} \cdot u$ holds for all $\alpha \geq \alpha_0$ and hence, we have $x_{\alpha} \in U$ for all $\alpha \geq \alpha_0$

For the general case, assume that $x_\alpha \xrightarrow{ru^*} x$ while $x_\alpha \not \goestru x$. Then there exists an open $\tau_{ru}$-neighborhood $V \subset X$ of $x$ such that for each $\alpha$ there exists $\beta_\alpha \geq \alpha$ with $x_{\beta_\alpha} \not \in V$. We claim that $(x_{\beta_\alpha})_{\alpha}$ is a subnet of $(x_\alpha)_{\alpha}$. For each $\alpha_1$ and $ \alpha_2$ find $\alpha$ such that $\alpha \geq \beta_{\alpha_1}, \beta_{\alpha_2}$ and take $\beta_{\alpha} \geq \alpha$. Hence $(x_{\beta_\alpha})_{\alpha}$ is a net and by construction of $(\beta_\alpha)_\alpha$ it is a subnet of $(x_\alpha)_{\alpha}$. By assumption, there exists a subnet of $(x_{\beta_\alpha})_{\alpha}$ which converges relatively uniformly to $x$. This subnet necessarily $\tau_{ru}$-converges to $x$. This is a contradiction to $x_{\beta_\alpha} \not \in V$ for all $\alpha$.
\end{proof}

\begin{ex}\label{first uncountable ordinal}
Consider the first uncountable ordinal $\omega_1$. It is well-known that $\omega_1$ is an uncountable well-ordered set and all countable subsets of $\omega_1$ have suprema. This immediately yields that no cofinal subset of $\omega_1$ is countable.

Let $X$ be the vector lattice of all real functions on $\omega_1$ with countable support. By \cite[Example 2.2]{Moore:68}, the relative uniform topology on $X$ is the topology of pointwise convergence. Consider the net $(\chi_{\alpha})_{\alpha \in \omega_1}$ in $X$ where $\chi_{\alpha}$ is the characteristic function of $\{\alpha\}$. It is clear that $(\chi_{\alpha})_{\alpha \in \omega_1}$ converges pointwise to $0$. 

Assume that there exists a subnet $(\chi_{\beta})$ of $(\chi_\alpha)$ such that $\chi_{\beta} \goesru 0$. Then there exists $u \in X$ and $\beta_0$ such that $|\chi_{\beta}| \leq u$ for all $\beta \geq \beta_0$. Hence, for all $\beta \geq \beta_0$ we have $u(\beta)\not =0$. Since $\omega_1$ has no countable cofinal subsets, the set $\{\beta:\; \beta \geq \beta_0\}$ is uncountable, so that the support of $u$ is uncountable. This is absurd. \end{ex}

\section{Semigroups on $(X, \tau_{ru})$} \label{section:tru semigroups}

In this section we introduce two notions of continuity for semigroups on general vector lattices and provide the first examples such as the (left) translation and the heat semigroup. In \Cref{Koopman} we will encounter more examples. We show that these notions expand semigroup theory to spaces which are not locally convex or complete. In \Cref{L_p 0<p<infty} we will see that these notions truly differ. Furthermore, we provide conditions under which it is enough to check $\tau_{ru}$-continuity of a semigroup on a $\tau_{ru}$-dense set to obtain $\tau_{ru}$-continuity on the whole vector lattice. Finally, we will prove that on bounded time intervals each orbit map of a relatively uniformly continuous semigroup is order bounded.

A semigroup $(T(t))_{t\geq 0}$ on $X$ is \emph{strongly continuous with respect to $ \tau_{ru}$} or \emph{$\tau_{ru}$-strongly continuous} if for each $x\in X$ the orbit map $\zeta_x\colon (\mathbb R_+, \tau_e) \rightarrow (X, \tau_{ru})$ is continuous, i.e. $$\zeta_x(t+s) \underset{}{\goestru } \zeta_x(s)$$ for each $s \geq 0$ as $t \rightarrow 0$. If, in addition, we have $$\zeta_x(t+s) \underset{}{\goesru } \zeta_x(s)$$
for each $x \in X$ and $s \geq 0$ as $t \rightarrow 0$, then $(T(t))_{t\geq 0}$ is \emph{relatively uniformly continuous}. Since relative uniform convergence implies $\tau_{ru}$-convergence, every relatively uniformly continuous semigroup is $\tau_{ru}$-strongly continuous.
%If $(T(t))_{t\in \RR}$ is a group on $X$, then $(T(t))_{t\in \RR}$ is \emph{strongly continuous with respect to $ \tau_{ru}$} or \emph{r.u. continuous} on $X$ if the semigroups $(T(t))_{t \geq 0}$ and $(T(-t))_{t \geq 0}$ are both strongly continuous with respect to $ \tau_{ru}$ or r.u. continuous on $X$, respectively.
In the special case when $X$ is a Banach lattice, a positive semigroup $(T(t))_{t\geq 0}$ is $\tau_{ru}$-strongly continuous on $X$ if and only if it is a positive $C_0$-semigroup on $X$. We proceed by examples of relatively uniformly continuous semigroups.
%A function $f\colon \mathbb R\to X$ is \emph{relatively uniformly continuous} if for each $t_0\in \mathbb R$ there exists some $u\in X_+$ such that for each $\varepsilon>0$ there exists $\delta>0$ such that $|f(t)-f(t_0)|\leq \varepsilon u$ holds for all $t\in [t_0-\delta,t_0+\delta]$.

For a function $f \colon  \RR \rightarrow \RR$ and $t \geq 0$, we consider the (left) translation operator
$$(T_l(t)f)(x)=f(t+x), \qquad x \in \RR$$
of $f$ by $t$. It is evident that by fixing a translation invariant space $Y$ of functions on $\RR$ one obtains a semigroup $(T_l(t))_{t \geq 0}$ on $Y$ which we call the \emph{(left) translation semigroup} on $Y$.

%The following proposition provides an example of an r.u. continuous semigroup.
\begin{prop}\label{translation semigroup on Cc}
The (left) translation semigroup $(T_l(t))_{t\geq 0}$ is relatively uniformly continuous on $\C_c(\RR)$.
%, $\Lip(\RR)$ and $\UC(\RR)$.
\end{prop}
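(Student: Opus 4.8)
The plan is to verify the definition of relative uniform continuity directly by exhibiting, for each fixed $f \in \C_c(\RR)$ and each $s \geq 0$, a single regulator $u \in \C_c(\RR)$ controlling the differences $T_l(s+t)f - T_l(s)f$ as $t \to 0$. Writing $(T_l(s+t)f - T_l(s)f)(x) = f(s+t+x) - f(s+x)$, I would first handle the supports. Since $f$ has compact support, say $\supp f \subseteq [-R,R]$, the function $x \mapsto f(s+t+x)$ is supported in $[-R-s-t,\, R-s-t]$; restricting attention to $|t| \leq 1$, all these supports, together with that of $T_l(s)f$, lie inside the fixed compact set $K := [-R-s-1,\, R-s+1]$, which depends only on $f$ and $s$. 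Hence $T_l(s+t)f - T_l(s)f$ vanishes off $K$ for all $|t| \leq 1$.

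The second ingredient is uniform convergence. A continuous function with compact support is uniformly continuous, so given $\varepsilon > 0$ there is $\delta > 0$ with $|f(a)-f(b)| \leq \varepsilon$ whenever $|a-b| \leq \delta$. Then $|f(s+t+x)-f(s+x)| \leq \varepsilon$ for all $x \in \RR$ once $|t| \leq \delta$, that is, $\norm{T_l(s+t)f - T_l(s)f}_{\infty} \leq \varepsilon$.

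To combine the two, I would fix at the outset any positive $u \in \C_c(\RR)$ with $u \equiv 1$ on $K$; crucially $u$ depends only on $f$ and $s$, not on $\varepsilon$. For $|t| \leq \min\{1,\delta\}$ the pointwise estimate $|T_l(s+t)f - T_l(s)f| \leq \varepsilon \cdot u$ holds: on $K$ it reduces to the uniform bound, while off $K$ the left-hand side is zero. This is exactly $T_l(s+t)f \goesru T_l(s)f$ with regulator $u$, for each $s \geq 0$, which is relative uniform continuity. Alternatively, the same two facts---uniform convergence together with a common compact support---let one invoke \Cref{compact support ru} to conclude relative uniform convergence without writing down $u$ explicitly.

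I do not expect a serious obstacle: the argument rests only on uniform continuity of compactly supported functions and on bookkeeping of supports. The one point requiring care is that the regulator must be chosen independently of $\varepsilon$, which is why I isolate the $\varepsilon$-free compact set $K$ first; the boundary case $s=0$ (where only $t \searrow 0$ is admissible) and the case $s>0$ are then covered uniformly by the same estimate.
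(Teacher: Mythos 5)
Your proposal is correct and follows essentially the same route as the paper: uniform continuity of the compactly supported $f$ gives $\|T_l(s+t)f-T_l(s)f\|_\infty\to 0$, and the support bookkeeping confines all the translates to a fixed compact set, after which one either builds the regulator $u\equiv 1$ on that set explicitly (as you do) or cites \Cref{compact support ru} (as the paper does). The support estimate, the $\varepsilon$-independence of the regulator, and the treatment of the endpoint $s=0$ are all handled correctly.
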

\begin{proof}
%We first prove that the translation semigroup is relatively uniformly continuous on $\C_c(\RR)$ by applying \Cref{compact support ru}.
 Fix $f \in \C_c(\RR)$, $\varepsilon>0$ and $s\geq 0$. Since $f$ is uniformly continuous on $\RR$, there exists $\delta>0$ such that
$\|T_l(s+h)f-T_l(s)f\|_\infty<\varepsilon$ for all $|h|<\delta.$ This proves $T_l(s+h)f\xrightarrow{\|\cdot\|_\infty}T_l(s)f$ in $\C_c(\RR)$ as $h\to 0.$

Since $T_l(s)f\in \C_c(\RR)$, there exists $n\in\mathbb N$ such that $\supp T_l(s)f\subseteq [-n,n].$ Choose any $h_0>0$. If $|h|<h_0$, then a direct computation shows that
$\supp T_l(s+h)f\subseteq [-n-h_0,n+h_0]$. An application of \Cref{compact support ru} concludes the proof.
\end{proof}
In the end of \Cref{Koopman} we will see that the (left) translation semigroup is also relatively uniformly continuous on the vector lattices $\Lip(\RR)$, $\UC(\RR)$ and $\C(\RR)$.
%\begin{rem}Although the vector lattice $C_c(\mathbb R)$ is not norm complete, it is $\tau_{ru}$-complete. To see this, pick a relatively uniformly Cauchy sequence $(f_n)_{n\in\mathbb N}$ in $C_c(\mathbb R)$ with a regulator $u$ whose support is compact. Obviously, the sequence $(f_n)_{n\in\mathbb N}$ is a Cauchy sequence with respect to the supremum norm on the Banach space $C_b(\mathbb R)$, hence there exists a limit $f \in C_b(\mathbb R)$. From the inequality $|f|\leq |f_N|+ u$ which holds for $N$ large enough and the fact that supports of $f_N$ and $u$ are compact, we conclude $f\in C_c(\mathbb R)$.\end{rem}
The following proposition, whose proof is postponed until we prove \Cref{relatively uniformly continuous semigroup 0}, shows that the heat semigroup is relatively uniformly continuous on $\Lip(\RR^N)$ and $\UC(\RR^N)$ for each $N \in \NN$ .
\begin{prop}\label{heat semigroup}
For a fixed $N \in \NN$ consider $X =\Lip(\RR^N)$ or $X=\UC(\RR^N)$ and the family of operators $(T(t))_{t\geq 0}$ defined by
\begin{align*}
(T(t)f)(y)=\frac{1}{(4 \pi t )^{N/2}} \int_ {\RR^N}^{} e^{-\frac{\|x\|^{2}}{4t}} \cdot  f(x+y) \, dx, \quad T(0)=I_{X},
\end{align*}
for each $t \geq 0, x \in \RR^N$ and $f \in X$. Then $(T(t))_{t\geq 0}$ is a relatively uniformly continuous positive semigroup on $X$.
\end{prop}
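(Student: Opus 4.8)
The plan is to work entirely in the sup-norm, which is stronger than what is needed: by \Cref{1} the function $u(x) = 1 + \|x\|$ is an order unit for $X$, so by \Cref{topology examples}(a) relative uniform convergence $g_\alpha \goesru g$ is exactly convergence in $\|\cdot\|_u$, and since $u \geq \one$ we have $\|g\|_u \leq \|g\|_\infty$; hence it suffices to show that each orbit map $s \mapsto T(s)f$ is continuous from $(\RR_+, \tau_e)$ into $(X, \|\cdot\|_\infty)$. First I would record the routine structural facts. Positivity of each $T(t)$ is immediate from positivity of the Gaussian kernel. For well-definedness, \Cref{1} gives the linear growth bound $|f(z)| \leq C(1+\|z\|)$, so the defining integral converges because the Gaussian has a finite first moment; moreover the contraction estimate below shows that $T(t)f$ inherits the uniform continuity (resp. the Lipschitz constant) of $f$, so $T(t)f \in X$. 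The semigroup law $T(s+t) = T(s)T(t)$ is the Chapman--Kolmogorov identity $k_s * k_t = k_{s+t}$ for the Gaussian kernels $k_t(x) = (4\pi t)^{-N/2}e^{-\|x\|^2/(4t)}$, while $T(0) = I_X$ by definition.

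The analytic core is a single estimate obtained after the substitution $x = 2\sqrt{t}\,w$, which turns the operator into the probabilistic form
$$(T(t)f)(y) = \frac{1}{\pi^{N/2}}\int_{\RR^N} e^{-\|w\|^2} f\bigl(2\sqrt{t}\,w + y\bigr)\,dw, \qquad \frac{1}{\pi^{N/2}}\int_{\RR^N} e^{-\|w\|^2}\,dw = 1.$$
Applying \Cref{1}, given $\varepsilon > 0$ I would choose $\delta > 0$ so that $|f(a) - f(b)| \leq \varepsilon(\|a-b\|\delta^{-1} + 1)$ for all $a, b \in \RR^N$, and estimate, for every $y$,
$$|(T(t)f)(y) - f(y)| \leq \frac{\varepsilon}{\pi^{N/2}}\int_{\RR^N} e^{-\|w\|^2}\bigl(2\sqrt{t}\,\|w\|\,\delta^{-1} + 1\bigr)\,dw = \varepsilon\bigl(1 + 2\sqrt{t}\,\delta^{-1} C_N\bigr),$$
where $C_N = \pi^{-N/2}\int_{\RR^N} e^{-\|w\|^2}\|w\|\,dw < \infty$. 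This bound is uniform in $y$, so $\|T(t)f - f\|_\infty \to 0$ as $t \searrow 0$, which settles continuity of the orbit map at $s = 0$.

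To pass to an arbitrary $s \geq 0$ I would exploit the observation that Gaussian convolution never increases the modulus of continuity: from the probabilistic form,
$$|(T(\sigma)f)(a) - (T(\sigma)f)(b)| \leq \frac{1}{\pi^{N/2}}\int_{\RR^N} e^{-\|w\|^2}\,\bigl|f(2\sqrt{\sigma}\,w + a) - f(2\sqrt{\sigma}\,w + b)\bigr|\,dw \leq \omega_f(\|a - b\|),$$
where $\omega_f$ denotes the modulus of continuity of $f$. Consequently the $\delta$ furnished by \Cref{1} for $f$ works simultaneously for every $T(\sigma)f$, and the previous estimate upgrades to
$$\sup_{\sigma \geq 0}\,\|(T(t) - I_X)\,T(\sigma)f\|_\infty \leq \varepsilon\bigl(1 + 2\sqrt{t}\,\delta^{-1} C_N\bigr) \xrightarrow[t \searrow 0]{} \varepsilon.$$
Since $\varepsilon$ is arbitrary, the left-hand side tends to $0$. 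Writing $T(s+t)f - T(s)f = (T(t) - I_X)T(s)f$ for $t \geq 0$ and $T(s)f - T(s-t)f = (T(t) - I_X)T(s-t)f$ for $0 \leq t \leq s$, this uniform bound yields $\|T(s+t)f - T(s)f\|_\infty \to 0$ as $t \to 0$ for every $s \geq 0$, hence $T(s+t)f \goesru T(s)f$, which is relative uniform continuity.

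The step I expect to be the real obstacle is two-sided continuity at interior times $s > 0$: approaching $s$ from below forces control of $(T(t) - I_X)T(s-t)f$, where the argument $T(s-t)f$ itself varies with $t$, so a naive appeal to the $s = 0$ case applied to a fixed function does not suffice. The uniform modulus-of-continuity bound is precisely what removes this dependence, making the estimate uniform over the whole orbit and thereby handling both directions at once. The remaining verifications --- convergence of the defining integral, the Chapman--Kolmogorov identity, and the finiteness of $C_N$ --- are routine once this uniform estimate is in place.
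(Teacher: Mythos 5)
Your proof is correct, and its analytic core --- the substitution $x=2\sqrt t\,w$, the appeal to \Cref{1} for the bound $|f(a)-f(b)|\le\varepsilon(\|a-b\|\delta^{-1}+1)$, and the resulting estimate $\|T(t)f-f\|_\infty\le\varepsilon(1+2\sqrt t\,\delta^{-1}C_N)$ with regulator $\one$ --- is exactly the computation in the paper. Where you diverge is in passing from continuity at $t=0$ to continuity at arbitrary $s\ge 0$: the paper simply invokes its general reduction result (\Cref{relatively uniformly continuous semigroup 0}, which rests on the order-boundedness of orbits from \Cref{uniform bound}) and stops after the $t=0$ estimate, whereas you handle general $s$ by hand, observing that Gaussian convolution does not worsen the modulus of continuity, so the $\delta$ from \Cref{1} works simultaneously for every $T(\sigma)f$ and the $t=0$ estimate becomes uniform along the orbit. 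Both arguments are valid; the paper's route is shorter because the general machinery is already in place and is what it wants to showcase, while yours is self-contained and exploits a genuine special feature of the heat semigroup (uniform equicontinuity of the whole orbit) that the general proposition does not require. The step you flagged as the "real obstacle" --- two-sided continuity at interior times, where $T(s-t)f$ varies with $t$ --- is precisely the issue that \Cref{relatively uniformly continuous semigroup 0} is designed to absorb in general (there it is handled via positivity and the order bound on $\{T(h)u:0\le h\le s\}$), so had you not found the contraction argument, citing that proposition would have been the intended exit.
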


Our next goal is to weaken the hypothesis on the semigroup which assure that it is still $\tau_{ru}$-strongly continuous  or relatively uniformly continuous. The motivation comes from the general theory of $C_0$-semigroups on Banach spaces. As it is shown in \cite[Proposition I.5.3]{Engel:00}, a semigroup on a Banach space $Y$ is a $C_0$-semigroup if and only if it is norm bounded on bounded time intervals and the orbit maps $\zeta_y$ are continuous on a norm dense set of elements of $Y$. This result heavily relies on the principle of uniform boundedness which is unavailable in general vector lattices. The following theorem is a vector lattice version of the above result for strong continuity with respect to $\tau_{ru}$ in the case when $\tau_{ru}$ is a linear topology. By \cite[Theorem 2.1]{Moore:68}, the topology $\tau_{ru}$ is linear whenever it is first countable.

\begin{thm}\label{str cont on dense subset}
If $\tau_{ru}$ is a linear topology on $X$, then a semigroup $(T(t))_{t\geq 0}$ on $X$ is $\tau_{ru}$-strongly continuous  if and only if for each $s \geq 0$ the following two assertions hold.
\begin{itemize}
\item [\textup{(i)}] There exists a $\tau_{ru}$-dense subset $D$ of $X$ such that $T(s+t)y \goestru T(s)y$ as $t \rightarrow 0$ for each $y \in D$.
\item [\textup{(ii)}] For each net $(x_\alpha)_{\alpha} \subset X$ with $x_\alpha \goestru 0$ and each open $\tau_{ru}$-neighborhood of zero $V_0 \subset X$ there exists $\alpha_0$ and $\delta >0$ such that
$$T(s+t)x_{\alpha_0}-T(s)x_{\alpha_0} \in V_0$$
holds for all $t \in [-\delta,\delta]$ when $s>0$ and all $t \in [0,\delta]$ when $s=0$.
\end{itemize}
\end{thm}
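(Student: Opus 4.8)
The plan is to prove the two implications separately, with necessity being essentially immediate and sufficiency carrying all the weight; linearity of $\tau_{ru}$ will be used only in the sufficiency direction.

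For necessity, suppose $(T(t))_{t\geq 0}$ is $\tau_{ru}$-strongly continuous. Then (i) holds trivially with $D=X$, since by definition every orbit map $\zeta_y$ is continuous. For (ii), given any net $(x_\alpha)_\alpha$ with $x_\alpha\goestru 0$ and any open neighborhood $V_0$ of zero, I would simply fix an arbitrary index $\alpha_0$ (the directed set is nonempty, and the convergence of the net plays no role here) and invoke continuity of the single orbit map $\zeta_{x_{\alpha_0}}$ at $s$. This produces a $\delta'>0$ with $T(s+t)x_{\alpha_0}-T(s)x_{\alpha_0}\in V_0$ for $|t|<\delta'$ (resp. $t\in[0,\delta')$ when $s=0$); passing to $\delta=\delta'/2$ yields the closed interval demanded.

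For sufficiency, fix $x\in X$ and $s\geq 0$, and let $U$ be an open $\tau_{ru}$-neighborhood of zero; the goal is $T(s+t)x-T(s)x\in U$ for all small $t$. Using linearity of $\tau_{ru}$, I would first pick open neighborhoods $V_0,V_1$ of zero with $V_0+V_1\subseteq U$. Since $D$ is $\tau_{ru}$-dense there is a net $(y_\alpha)_\alpha\subset D$ with $y_\alpha\goestru x$, and, again by linearity, $x_\alpha:=x-y_\alpha\goestru 0$. Applying (ii) to this net and to $V_0$ yields an index $\alpha_0$ and a $\delta>0$ so that, with $y:=y_{\alpha_0}\in D$,
$$T(s+t)(x-y)-T(s)(x-y)\in V_0$$
for all $t\in[-\delta,\delta]$ (resp. $t\in[0,\delta]$ when $s=0$). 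I would then decompose, using linearity of the operators,
$$T(s+t)x-T(s)x=\bigl[T(s+t)y-T(s)y\bigr]+\bigl[T(s+t)(x-y)-T(s)(x-y)\bigr].$$
Since $y\in D$, assertion (i) places the first bracket in $V_1$ for all small $t$, while the second lies in $V_0$ by the display above; intersecting the two $t$-ranges and using $V_0+V_1\subseteq U$ gives the claim.

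The heart of the argument — the step replacing the uniform boundedness principle used in the Banach-space analogue \cite[Proposition I.5.3]{Engel:00} — is exactly the invocation of (ii). Plain $\tau_{ru}$-convergence of $(x-y_\alpha)$ to zero would only control the error term $T(s+t)(x-y_\alpha)$ at each fixed $t$, with a potentially $t$-dependent index; (ii) is engineered to furnish one fixed approximant $y=y_{\alpha_0}$ whose error is small \emph{simultaneously} for all $t$ in an interval, which is precisely what lets the two pieces combine. The remaining care is bookkeeping: one must keep the cases $s=0$ (one-sided increments $t\in[0,\delta]$) and $s>0$ (two-sided $t\in[-\delta,\delta]$) aligned across (i), (ii), and the convention for ``$\goestru$ as $t\to 0$ for $s\geq 0$'', and one must appeal to linearity of $\tau_{ru}$ both to split $U$ additively and to pass from $y_\alpha\goestru x$ to $x-y_\alpha\goestru 0$.
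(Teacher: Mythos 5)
Your proposal is correct and follows essentially the same route as the paper: the forward direction is the same routine verification, and for the converse the paper likewise splits a neighborhood of zero additively, takes a net in $D$ converging to $x$, applies (ii) to the difference net to obtain a single approximant $x_{\alpha_0}$ working uniformly on a $t$-interval, and uses the identical decomposition $T(s+t)x-T(s)x=[T(s+t)(x-x_{\alpha_0})-T(s)(x-x_{\alpha_0})]+[T(s+t)x_{\alpha_0}-T(s)x_{\alpha_0}]$. Your explicit remark about intersecting the two $t$-ranges is a point the paper compresses, but the argument is the same.
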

\begin{proof}%$(\Rightarrow)$ Fix $s \geq 0$. By assumption, for each $x \in X$ we have $$T(s+t)x \underset{t \rightarrow 0}{\goestru } T(s)x,$$ i.e., for each open $\tau_{ru}$-neighborhood of zero $V_0 \subset X$ there exists $0<\delta<s$ such that $T(s+t)x-T(s)x \in V_0$ holds for all $t \in [-\delta, \delta]$. This proves the forward implication.\\
%Assume that $(T(t))_{t\geq 0}$ is $\tau_{ru}$-strongly continuous and fix $s \geq 0$. To prove (i) just take $D=X$. To prove (ii), fix an open $\tau_{ru}$-neighborhood of zero $V_0 \subset X$ and a net $(x_\alpha)_{\alpha} \subset X$. For each $\alpha$ there exists $\delta >0$ such that $T(s+t)x_\alpha-T(s)x_\alpha \in V_0$ holds for all $t \in [-\delta, \delta]$ when $s>0$ and $t \in [0,\delta]$ when $s=0$.\\
Since the forward implication is clear we only prove the backward implication.

 Fix an open $\tau_{ru}$-neighborhood of zero $V_0 \subset X$ and take any open $\tau_{ru}$-neighborhood of zero $V_1 \subset X$ such that $V_1+V_1 \subset V_0$. Fix $s \geq 0$ and $ x \in X$. By (i), there exists a net $(x_\alpha)_{} \subset X$ such that $x_\alpha \goestru x$ and $T(s+t)x_\alpha \goestru T(s)x_\alpha$ as $t \rightarrow 0$ for each $\alpha$. Hence, by (ii), there exist $\alpha_0$ and $\delta >0$ such that
$$T(s+t)(x-x_{\alpha_0})-T(s)(x-x_{\alpha_0}) \in V_1 \
\text{  and  } \ T(s+t)x_{\alpha_0}-T(s)x_{\alpha_0}\in V_1$$
hold for all $t \in [- \delta, \delta]$ when $s>0$ and all $t \in [0, \delta]$ when $s=0$. Therefore,
\begin{align*}
T(s+t)x-T(s)x &= [T(s+t)(x-x_{\alpha_0})-T(s)(x-x_{\alpha_0})]\\
&+[T(s+t)x_{\alpha_0}-T(s)x_{\alpha_0}] \in V_1+V_1 \subset V_0
\end{align*}
holds for all $t \in [- \delta, \delta]$ when $s>0$ and all $t \in [0, \delta]$ when $s=0$. This proves that $(T(t))_{t\geq 0}$ on $X$ is $\tau_{ru}$-strongly continuous.
\end{proof}

In \Cref{ru BS} we will establish an analogous version of \Cref{str cont on dense subset} for relatively uniformly continuous semigroups on a particular class of vector lattices which allow a version of the principle of uniform boundedness.

The importance of the following corollary lies in its applicability. For locally $\tau_{ru}$-equicontinuous semigroups, $\tau_{ru}$-strong continuity is equivalent to $\tau_{ru}$-strong continuity at zero.

\begin{cor}\label{equicont to strongly continuous}
Let $\tau_{ru}$ be a linear topology on $X$ and $(T(t))_{t\geq 0}$ a locally $\tau_{ru}$-equicontinuous semigroup on $X$. Then $(T(t))_{t\geq 0}$ is $\tau_{ru}$-strongly continuous  if and only if there exists a $\tau_{ru}$-dense subset $D$ of $X$ such that $T(h)y \goestru y$ as $h \searrow 0$ for each $y \in D$.
\end{cor}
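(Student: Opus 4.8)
The forward implication is immediate: if $(T(t))_{t\geq 0}$ is $\tau_{ru}$-strongly continuous, then $D=X$ is $\tau_{ru}$-dense and evaluating strong continuity at $s=0$ gives $T(h)y\goestru y$ as $h\searrow 0$ for every $y\in X$. So the substance lies in the backward implication, which I would obtain from \Cref{str cont on dense subset} by verifying its two conditions for every $s\geq 0$; the given one-sided continuity on $D$ will feed condition (i), while local $\tau_{ru}$-equicontinuity will handle condition (ii).

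For condition (ii), fix $s\geq 0$, a net $x_\alpha\goestru 0$ and an open neighborhood of zero $V_0$. Since $\tau_{ru}$ is linear, I may pick a balanced neighborhood $V_1$ with $V_1+V_1\subseteq V_0$. Local equicontinuity of $\{T(r):0\leq r\leq s+1\}$ yields a neighborhood $U$ with $T(r)U\subseteq V_1$ for all such $r$, and since $x_\alpha\goestru 0$ some $x_{\alpha_0}$ lies in $U$. Taking $\delta:=\min\{1,s\}$ when $s>0$ and $\delta:=1$ when $s=0$, both $T(s+t)x_{\alpha_0}$ and $T(s)x_{\alpha_0}$ then lie in $V_1$ for all admissible $t$, so their difference lies in $V_1+V_1\subseteq V_0$. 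This is precisely (ii).

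For condition (i), I would keep the given dense set $D$ and upgrade the right-continuity at $0$ to two-sided continuity at each $s>0$ through the semigroup law. For the right limit, $T(s+t)y-T(s)y=T(s)(T(t)y-y)$ with $t>0$ reduces the claim to the hypothesis $T(t)y-y\goestru 0$ composed with $\tau_{ru}$-continuity of the single operator $T(s)$ (which follows from local equicontinuity). For the left limit, $T(s-h)y-T(s)y=-T(s-h)(T(h)y-y)$ with $0<h<s$ again starts from $T(h)y-y\goestru 0$; the case $s=0$ is the hypothesis itself. With (i) and (ii) in hand, \Cref{str cont on dense subset} delivers $\tau_{ru}$-strong continuity.

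I expect the left limit in (i) to be the one genuinely delicate point: there the operator $T(s-h)$ applied to $T(h)y-y$ varies with $h$, so continuity of a single fixed operator does not suffice. This is exactly where local $\tau_{ru}$-equicontinuity is indispensable — equicontinuity of $\{T(r):0\leq r\leq s\}$ furnishes, for a prescribed neighborhood, a single neighborhood $U$ with $T(r)U$ small for all $r\in[0,s]$ simultaneously, so that once $T(h)y-y\in U$ for small $h$ the elements $T(s-h)(T(h)y-y)$ are uniformly controlled. I would also be careful to keep every time argument nonnegative (by restricting $\delta\leq s$) and to work with balanced neighborhoods so that the differences above remain inside the prescribed sets.
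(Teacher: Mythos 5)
Your proposal is correct and follows essentially the same route as the paper: both verify conditions (i) and (ii) of \Cref{str cont on dense subset}, using the semigroup identities $T(s+h)y-T(s)y=T(s)(T(h)y-y)$ and $T(s-h)y-T(s)y=T(s-h)(y-T(h)y)$ together with a symmetric (balanced) neighborhood $U$ furnished by local $\tau_{ru}$-equicontinuity on $[0,s+1]$. Your observation that the left limit is the point where equicontinuity (rather than continuity of a single operator) is indispensable matches exactly how the paper's argument is organized.
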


\begin{proof}
It suffices to prove the ``only if" statement.
We will check (i) and (ii) from \Cref{str cont on dense subset}. Fix $s>0$ and an open $\tau_{ru}$-neighborhood of zero $V_0 \subset X$. There exists an open $\tau_{ru}$-neighborhood of zero $V_1 \subset X$ such that $V_1+V_1 \subset V_0$. Since $(T(t))_{t\geq 0}$ is a locally $\tau_{ru}$-equicontinuous semigroup on $X$, there exists a symmetric open $\tau_{ru}$-neighborhood of zero $U \subset X$ such that $T(t)U \subset V_1$ for all $t \in [0,s+1]$.

(i) Assume that there exists a $\tau_{ru}$-dense subset $D$ of $X$ such that for each $y \in D$ we have $T(h)y \goestru y$ as $h \searrow 0$. Fix $y \in D$. There exists $ \delta >0 $ such that $T(h)y-y \in U$ for all $h \in [0 , \delta]$. Then
\begin{align*}
T(s+h)y-T(s)y =T(s)(T(h)y-y) \in V_1 \subset V_0
\end{align*}
holds for all for all $h \in [0 , \delta]$ and
\begin{align*}
T(s-h)y-T(s)y =T(s-h)(y-T(h)y) \in V_1 \subset V_0
\end{align*}
holds for all $h \in [0 , \min\{\delta,s\}]$ when $s >0$. This proves (i).

(ii) Pick a net $(x_\alpha)_{\alpha} \subset X$ with $x_\alpha \goestru 0$ and find $\alpha_0$ such that $x_{\alpha_0} \in U$. If $0<h<1$, then $T(s+h)x_{\alpha_0}\in V_1$. If $0<h<s$, then again $T(s-h)x_{\alpha_0}\in V_1$. Hence, if $h$ satisfies $|h|<\min\{1,s\}$, we have
$$T(s+h)x_{\alpha_0}-T(s)x_{\alpha_0} \in V_1+ V_1 \subset V_0$$ which completes the proof. 
\end{proof}
The following proposition shows that the notion of relative uniform continuity is, in general, stronger than the notion of strong continuity with respect to $\tau_{ru}$. Furthermore, it also provides an example of a completely metrizable locally solid vector lattice $(X,\tau_{ru})$ that is not locally convex and a $\tau_{ru}$-strongly continuous semigroup on $X$ which is not relatively uniformly continuous.

Having in mind that $\tau_{ru}$ on Banach lattices agrees with norm topology, in the case $1\leq p<\infty$ the following proposition recovers \cite[Example I.5.4]{Engel:00}.
%\marg{change order of prop 3.5 with 3.6 and use argument of 3.6 in 3.5}
\begin{prop}\label{L_p 0<p<infty}
For each $0<p<\infty$ the (left) translation semigroup $(T_l(t))_{t\geq 0}$ on $L^p(\RR)$ is $\tau_{ru}$-strongly continuous but not relatively uniformly continuous.
\end{prop}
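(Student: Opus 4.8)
The plan is to handle the two assertions separately, reducing each to a concrete statement about $L^p$ by first identifying $\tau_{ru}$ with the usual topology. By \Cref{topology examples}, on $L^p(\RR)$ the topology $\tau_{ru}$ coincides with the standard completely metrizable locally solid topology --- the norm topology when $1\le p<\infty$ and the topology of the metric $d_p(f,g)=\int_\RR|f-g|^p$ when $0<p<1$ --- so in particular $\tau_{ru}$ is linear. Since Lebesgue measure is translation invariant, every $T_l(t)$ preserves $\int_\RR|f-g|^p$, so the family $(T_l(t))_{t\ge0}$ is (locally) $\tau_{ru}$-equicontinuous. By \Cref{equicont to strongly continuous} it then suffices to exhibit a $\tau_{ru}$-dense set $D$ on which $T_l(h)f\goestru f$ as $h\searrow 0$; I would take $D=\C_c(\RR)$, which is dense in $L^p(\RR)$. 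For $f\in\C_c(\RR)$ the uniform continuity of $f$ together with the boundedness of its support gives $\int_\RR|f(x+h)-f(x)|^p\,dx\to 0$ as $h\to 0$, i.e. $T_l(h)f\to f$ in $L^p$ and hence $T_l(h)f\goestru f$. This yields $\tau_{ru}$-strong continuity.

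For the failure of relative uniform continuity it is enough to produce one $f$ and one $s$ for which $T_l(s+h)f\goesru T_l(s)f$ fails, and I would work at $s=0$. The decisive step is the choice of $f$: let $C\subset[0,1]$ be a fat Cantor set, i.e. a closed, nowhere dense set with $|C|>0$, and put $f=\chi_C\in L^p(\RR)$. Then $|f(x+h)-f(x)|=\chi_{(C-h)\triangle C}(x)$. Assume for contradiction that $T_l(h)\chi_C\goesru\chi_C$ as $h\searrow 0$ with regulator $u\in L^p(\RR)$. Then for every $\varepsilon>0$ there is $\delta>0$ such that $\chi_{(C-h)\triangle C}\le\varepsilon u$ almost everywhere for each individual $h\in(0,\delta)$, that is, $u\ge 1/\varepsilon$ a.e. on $(C-h)\triangle C$.

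Because $C$ is closed and nowhere dense, $C^c$ is open and dense, so for each $x\in C$ the set $\{h\in(0,\delta):x+h\in C^c\}$ is open and nonempty; fixing a countable dense subset $\{h_k\}$ of $(0,\delta)$, some $h_k$ belongs to it, whence $x\in C\setminus(C-h_k)\subseteq(C-h_k)\triangle C$. Therefore $C\subseteq\bigcup_k\big((C-h_k)\triangle C\big)$, and since $u\ge 1/\varepsilon$ a.e. on each $(C-h_k)\triangle C$, we obtain $u\ge 1/\varepsilon$ a.e. on $C$. Consequently $\int_C u^p\ge\varepsilon^{-p}\,|C|$ for every $\varepsilon>0$, which is impossible for $u\in L^p(\RR)$ since $|C|>0$. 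This contradiction shows $(T_l(t))_{t\ge0}$ is not relatively uniformly continuous.

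The main obstacle is that the naive discontinuous test functions do not work, which is exactly why the topologically thick discontinuity set is needed. For instance, for $f=\chi_{[0,1]}$ the oscillation concentrates near the two jump points, and $u(x)=|x|^{-1/(2p)}+|x-1|^{-1/(2p)}$ is a bona fide regulator (it lies in $L^p$ because $\int_0^1 t^{-1/2}\,dt<\infty$), so that particular orbit does converge relatively uniformly. Taking $C$ nowhere dense of positive measure is what makes the translates $(C-h)\triangle C$, for small $h$, recapture all of $C$ and thereby force the regulator to exceed $1/\varepsilon$ on a set of fixed positive measure for every $\varepsilon$. The one point demanding care is the interchange of the quantifiers ``for all $h$'' and ``almost every $x$'', which the countable dense set of translates above resolves.
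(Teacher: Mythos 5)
Your proof of $\tau_{ru}$-strong continuity is essentially the paper's: both identify $\tau_{ru}$ with the $d_p$-topology via \Cref{topology examples}, observe that translation invariance of Lebesgue measure makes the semigroup locally $\tau_{ru}$-equicontinuous, and invoke \Cref{equicont to strongly continuous} with $D=\C_c(\RR)$ (the paper obtains the convergence on $\C_c(\RR)$ from \Cref{translation semigroup on Cc} rather than by the direct $L^p$ computation, but this is the same argument). For the failure of relative uniform continuity your route is genuinely different and, as far as I can see, correct. The paper takes the unbounded function $f(x)=|x-\tfrac12|^{-1/(2p)}$ and shows that the orbit $\{T_l(t)f:\ 0\le t\le\delta\}$ cannot even be order bounded: if it were, Dedekind completeness of $L^p(\RR)$ would yield a supremum that is infinite on a set of positive measure. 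Your fat Cantor set example is in a sense sharper: the orbit of $\chi_C$ \emph{is} order bounded (by $\chi_{[-1,2]}$ for $t\le 1$), so the obstruction of \Cref{uniform bound} is unavailable, and you instead attack the regulator directly, using a countable dense set of translates to upgrade ``$u\ge 1/\varepsilon$ a.e.\ on each $(C-h_k)\,\triangle\, C$'' to ``$u\ge 1/\varepsilon$ a.e.\ on $C$'', whence $\int_C u^p\ge\varepsilon^{-p}|C|$ for every $\varepsilon>0$, which is impossible. The quantifier interchange you flag is resolved correctly by the countability of $\{h_k\}$. The only blemish is in your side remark: $u(x)=|x|^{-1/(2p)}+|x-1|^{-1/(2p)}$ is not in $L^p(\RR)$ because of its tails at infinity, so you should truncate it to a neighbourhood of $[0,1]$ (which costs nothing, since the relevant symmetric differences are contained there). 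This does not affect the main argument.
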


\begin{proof}
Pick $0<p<\infty$ and denote by $\tau$ the topology induced by the metric $d_p$ on $L^p(\RR)$.
 %defined as $$ d_p(f_1,f_2)=\int_{\RR} |f_1(x)-f_2(x)|^p \ dx.$$
Since $(L^p(\RR), \tau)$ is a completely metrizable locally solid vector lattice, $\tau$ and $\tau_{ru}$ agree on $L^p(\RR)$ by \Cref{topology examples}(c). The same arguments as in the classical sense show that $\C_c(\RR)$ is dense in $(L^p(\RR), \tau_{ru})$.

Since $(T(t))_{t\geq 0}$ is relatively uniformly continuous on $\C_c(\RR)$ by \Cref{translation semigroup on Cc}, it is also $\tau_{ru}$-strongly continuous. Furthermore, for each $t \geq 0$ the operator $T_l(t)$ preserves every open $d_p$-ball with center at zero, from where it follows that the semigroup $(T_l(t))_{t\geq 0}$ is locally $\tau_{ru}$-equicontinuous on $L^p(\RR)$. By \Cref{equicont to strongly continuous}, we conclude that $(T_l(t))_{t\geq 0}$ is $\tau_{ru}$-strongly continuous on $L^p(\RR)$.
%
%and by \Cref{translation semigroup on Cc} we have $T_l(t)f \goestru f$ as $t \searrow 0$ for $f \in \C_c(\RR) $. Hence, by \Cref{equicont to strongly continuous} the semigroup $(T_l(t))_{t\geq 0}$ on $L^p(\RR)$ is strongly continuous with respect to $ \tau_{ru}$.

To show that $(T_l(t))_{t\geq 0}$ is not relatively uniformly continuous on $L^p(\RR)$, consider the function $$f\colon x \mapsto \left|\frac{1}{x-\frac{1}{2}}\right|^{\frac{1}{2p}}$$ in $L^p(\RR)$. Assume that there exist a function $u \in L^p(\RR)$ and $0<\delta <\frac{1}{2}$ such that $|T_l(t)f-f| \leq u$ holds in $L^p(\RR)$ for all $t \in [0, \delta]$, i.e., $$\left|\left|\frac{1}{t+x-\frac{1}{2}}\right|^{\frac{1}{2p}}- \left|\frac{1}{x-\frac{1}{2}}\right|^{\frac{1}{2p}}\right| \leq u(x)$$ holds for all $t \in [0, \delta]$ and almost every $x \in \RR$. Hence, the family $\mathcal F=\{T_l(t)f:\; 0\leq t\leq \delta\}$ is bounded above
and since $L^p(\RR)$ is Dedekind complete, there exists
$g:=\sup\{T_l(t)f:\; 0\leq t\leq \delta\}$ in $L^p(\RR)$. This is impossible since $g$ attains infinity on a set of positive measure.
%
%
%Then $u$ is unbounded almost everywhere on $[-\delta +\frac{1}{2},\frac{1}{2}+ \delta]$ which contradicts the assumption that $u \in L^p(\RR)$. This proves that $(T_l(t))_{t\geq 0}$ is not relatively uniformly continuous on $L^p(\RR)$.
\end{proof}
The remaining part of this paper is devoted to relatively uniformly continuous semigroups of positive operators. In \Cref{ru BS} we will prove \Cref{ruc on dense to ruc on whole space} which is a version of \Cref{equicont to strongly continuous} for relatively uniformly continuous semigroups.
In the case of $\tau_{ru}$-strongly continuous semigroups, we were able to provide such result only for locally $\tau_{ru}$-equicontinuous semigroups on $X$. The reason behind is not so surprising. Consider a semigroup of bounded operators on a Banach lattice. Since $\tau_{ru}$ agrees with norm topology, local $\tau_{ru}$-equicontinuity agrees with local equicontinuity which is equivalent to uniform boundedness of the semigroup on bounded time intervals. Furthermore, applying the principle of uniform boundedness, the latter is equivalent to the fact that the semigroup is pointwise bounded on bounded time intervals. For more details see \cite[Proposition I.5.3]{Engel:00}. In the case of relatively uniformly continuous semigroups we conclude that relatively uniformly continuous positive semigroups are pointwise order bounded on bounded time intervals.

\begin{prop}\label{uniform bound}
Suppose $(T(t))_{t\geq 0}$ is a positive semigroup on $X$ such that for each $x\in X$ we have $T(h)x\goesru x$ as $h\searrow 0$. Then for each $s \geq 0$ and $x \in X$ the set $$\{|T(t)x| \ \colon \; 0\leq t\leq s\}$$ is order bounded in $X$.
\end{prop}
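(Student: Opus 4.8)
The plan is to first reduce to the case $x\ge 0$ and then to exploit positivity in order to turn the purely local information supplied by the hypothesis near $0$ into a global bound on the whole interval $[0,s]$.

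First I would reduce to positive vectors. Since each $T(r)$ is a positive linear operator, one has $|T(r)z|\le T(r)|z|$ for every $z\in X$, so it suffices to bound $\{T(r)|x|\ \colon\ 0\le r\le s\}$ from above. As the hypothesis holds for \emph{every} vector, in particular for $|x|$, we may simply assume $x\ge 0$. Applying $T(h)x\goesru x$ with $\varepsilon=1$ furnishes a regulator $u\ge 0$ together with a single $\delta>0$ such that $|T(h)x-x|\le u$ for all $h\in[0,\delta]$; since $x\ge 0$, setting $w:=x+u\ge 0$ this gives the local two-sided estimate $0\le T(h)x\le w$ for all $h\in[0,\delta]$.

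The key step is to propagate this one local bound by the semigroup, using positivity. Applying the positive operator $T(t)$ to $0\le T(h)x\le w$ and using $T(t)T(h)=T(t+h)$ yields $0\le T(t+h)x\le T(t)w$ for all $t\ge 0$ and all $h\in[0,\delta]$, crucially with the \emph{same} $\delta$. Now I would partition $[0,s]$ by the points $k\delta$, $k=0,1,\dots,K$, where $K:=\lceil s/\delta\rceil$. Writing an arbitrary $r\in[(k-1)\delta,k\delta]$ as $r=(k-1)\delta+h$ with $h\in[0,\delta]$, the inequality above gives $0\le T(r)x\le T((k-1)\delta)w=:v_k$. Because $X$ is a vector lattice, the finite supremum $v:=\bigvee_{k=1}^{K}v_k$ exists, and then $0\le T(r)x\le v$ for every $r\in[0,K\delta]\supseteq[0,s]$. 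Undoing the reduction, $|T(r)x|\le T(r)|x|\le v$ on $[0,s]$, which is the assertion.

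The step I expect to be the main obstacle is obtaining a \emph{uniform} length $\delta$. The naive covering argument—invoking the hypothesis at each point $t_0\in[0,s]$ to bound the orbit only on $[t_0,t_0+\delta_{t_0}]$—breaks down, because relative uniform convergence comes with a $\delta$ (and a regulator) that depend on the vector; as $t_0$ approaches some point from the left the numbers $\delta_{t_0}$ may shrink to $0$, and the corresponding vector-dependent local bounds need not possess any common majorant, so a continuation/supremum argument over $[0,s]$ stalls. Positivity is precisely what removes this difficulty: it transports a single bound established near $0$ to every interval $[t,t+\delta]$ with one fixed $\delta$, reducing the whole problem to a finite supremum in the lattice.
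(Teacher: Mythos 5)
Your proof is correct and follows essentially the same route as the paper's: a single regulator $u$ and $\delta>0$ from the hypothesis at $t=0$ give $|T(h)x|\leq |x|+u$ on $[0,\delta]$, which is then propagated across $[0,s]$ by applying the positive operators $T(k\delta)$ and taking a finite supremum. The only cosmetic difference is your preliminary reduction to $x\geq 0$ via $|T(r)x|\leq T(r)|x|$, where the paper instead keeps $|x|+u$ as the local majorant throughout.
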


\begin{proof}
Fix $s \geq 0$ and $x \in X$. There exist $u\in X_+$ and $\delta>0$ such that for all $0\leq h\leq \delta$ we have $|T(h)x-x|\leq  u.$
Pick $t\in [0,s]$ and find $n\in \mathbb N_0$ and $0\leq h<\delta$ such that $t=n\delta+h.$ Then
$$|T(t)x|=|T(\delta)^n T(h)x|\leq T(\delta)^n(|x|+u).$$
Let $n_0$ be the smallest positive integer such that $n_0\delta \geq s.$ If we define
$$v:=\bigvee_{k=0}^{n_0} T(\delta)^k(|x|+u) \in X_+,$$
we have $\{|T(t)x|\ \colon\; 0\leq t\leq s\} \subset [0,v]$.
\end{proof}

%The next result is motivated by \cite[Proposition I.5.3]{Engel:00}. It shows that to check that a semigroup  is r.u. continuous it is enough to see that its orbit maps are continuous in 0. \marg{1}%It shows that one obtains r.u. continuity of $(T(t))_{t\geq 0}$ on $X$ by checking $T(t)x \underset{}{\goesru } x$ as $t \searrow 0$ for positive vectors $x \in X_+$.
%\marg{Changed.}This simple result is a fundamental tool in this paper and it indicates a difference between $C_0$-semigroups on Banach spaces and relatively uniformly continuous semigroups. On bounded time intervals the orbit maps of a $C_0$-semigroup are norm bounded  which is a purely quantitative statement, while for a relatively uniformly continuous semigroup the orbit maps are order bounded which is a qualitative statement.  For example,  consider a positive semigroup $(T(t))_{t\geq 0}$ on $\C_0(\Omega)$, $f \in X$ and $s \geq 0$ where $\Omega$ is a locally compact topological space. If $(T(t))_{t\geq 0}$ is a $C_0$-semigroup, then we can only say that there exists $M >0$ such that $\| T(t)f \|_\infty \leq M$ for all $t \in [0,s]$. Compared to that, if $(T(t))_{t\geq 0}$ is relatively uniformly continuous, then \Cref{uniform bound} implies that for each $\varepsilon >0$ one can find a compact set $K \subset \Omega$ such that $|T(t)f|_{|K^c} < \varepsilon$ holds for all $t \in [0,s]$ which is a more refined statement.

The following result is a version of \cite[Proposition I.5.3]{Engel:00} for relatively uniformly continuous semigroups of positive operators. It says that a semigroup is relatively uniformly continuous  if and only if it is relatively uniformly continuous at $t=0$ and positive $x$.

\begin{prop}\label{relatively uniformly continuous semigroup 0}
Let $(T(t))_{t\geq 0}$ be a positive semigroup on $X$. Then $(T(t))_{t\geq 0}$ is relatively uniformly continuous on $X$ if and only if $T(h)x \underset{}{\goesru } x$ as $h \searrow 0$ for positive vectors $x \in X_+$.
\end{prop}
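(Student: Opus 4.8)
The plan is to treat the forward implication as immediate and focus on the converse. If $(T(t))_{t\geq 0}$ is relatively uniformly continuous, then specializing the defining condition $T(s+t)x \goesru T(s)x$ (as $t \to 0$) to $s = 0$ and to $x \in X_+$ yields exactly the stated hypothesis, so there is nothing to prove there. For the converse, I would begin by upgrading the hypothesis from $X_+$ to all of $X$: given arbitrary $x \in X$, decompose $x = x^+ - x^-$ with $x^\pm \in X_+$, choose positive regulators $u_1, u_2$ for $T(h)x^\pm \goesru x^\pm$, and use $|T(h)x - x| \le |T(h)x^+ - x^+| + |T(h)x^- - x^-|$ to conclude $T(h)x \goesru x$ as $h \searrow 0$ with regulator $u_1 + u_2$. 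This already disposes of the case $s = 0$ of relative uniform continuity.

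Next I would establish the two-sided limit at each fixed $s > 0$, splitting it into its right and left parts and exploiting positivity via the pointwise estimate $|T(s)y| \le T(s)|y|$. Fix $x \in X$ and a positive regulator $u$ for $T(h)x \goesru x$. For the right limit, the functional equation gives $T(s+h)x - T(s)x = T(s)(T(h)x - x)$, so that $|T(s+h)x - T(s)x| \le T(s)|T(h)x - x| \le \varepsilon\, T(s)u$ for $h$ small; thus $T(s)u$ regulates the right limit. For the left limit, rewriting $T(s-h)x - T(s)x = -\,T(s-h)(T(h)x - x)$ for $0 < h < s$ produces the bound $|T(s-h)x - T(s)x| \le \varepsilon\, T(s-h)u$.

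The step I expect to be the main obstacle is precisely the left limit, because the majorant $T(s-h)u$ it produces depends on $h$, whereas relative uniform convergence demands a \emph{single} regulator valid for all small $h$ simultaneously. This is exactly where \Cref{uniform bound} is designed to be used: having already shown $T(h)y \goesru y$ as $h \searrow 0$ for every $y \in X$, its hypotheses are met, so the set $\{T(t)u : 0 \le t \le s\}$ --- consisting of positive vectors since $u \in X_+$ --- is order bounded by some $v \in X_+$. Then $T(s-h)u \le v$ for all $0 \le h \le s$, and likewise $T(s)u \le v$, so both the right and left estimates collapse to $|T(s\pm h)x - T(s)x| \le \varepsilon v$ for $0 < h < \min\{\delta, s\}$. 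Hence $v$ is a common regulator and $T(s+t)x \goesru T(s)x$ as $t \to 0$, which together with the $s = 0$ case completes the converse.
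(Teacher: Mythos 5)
Your proposal is correct and follows essentially the same route as the paper: decompose $x$ into positive and negative parts to reduce to the hypothesis on $X_+$, use $T(s\pm h)x - T(s)x = \pm T(s\wedge(s\pm h))(T(h)x - x)$ together with positivity, and invoke \Cref{uniform bound} to replace the $h$-dependent majorants $T(s-h)u$ by a single order bound $v$. Your explicit remark that the hypothesis must first be upgraded from $X_+$ to all of $X$ before \Cref{uniform bound} applies is a point the paper glosses over, but the substance of the argument is identical.
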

\begin{proof}
Only the ``if statement" requires a proof. Fix $s > 0$ and $x \in X$. Let $u$ be one of the regulators for $T(h)x_+\goesru x_+$ and $T(h)x_-\goesru x_-$ as $h\searrow 0$. Pick $\varepsilon>0$ and find $0<\delta < s$ such that for all $h \in [0,\delta]$ we have
$|T(h)x_+-x_+|\leq \frac{\varepsilon}{2} \cdot u$ and $|T(h)x_--x_-|\leq \frac{\varepsilon}{2} \cdot u.$ By \Cref{uniform bound} we can also find $v \in X_+$ such that $T(h)u\leq v$ for all $h \in [0,s]$. Then
\begin{align*}
|T(s+h)x-T(s)x|&\leq T(s) (|T(h)x_+-x_+| + |T(h)x_--x_-|)\\
& \leq \varepsilon \cdot T(s)u \leq \varepsilon \cdot v
\end{align*}
and, similarly,
\begin{align*}
|T(s-h)x-T(s)x|&\leq T(s-h)(|T(h)x_+-x_+| + |T(h)x_--x_-|)\\
&\leq \varepsilon \cdot T(s-h)u \leq \varepsilon \cdot v
\end{align*}
hold for all $h \in [0,\delta]$. This proves that $(T(t))_{t\geq 0}$ is relatively uniformly continuous on $X$.
\end{proof}
To see the usefulness of the preceding result we apply it in the following proof.

\begin{proof}[Proof of \Cref{heat semigroup}]
 We prove first that $\Lip(\RR^N)$ and $\UC(\RR^N)$ are invariant under $T(t)$ for any $t \geq 0$. Fix $f \in \UC(\RR^N)$ and estimate
\begin{equation}\label{uniformly}
| (T(t)f)(y)-(T(t)f)(z)|  \leq \frac{1}{(4 \pi t )^{N/2}} \int_ {\RR^N}^{} e^{-\frac{\|x\|^{2}}{4t}} \cdot  |f(x+y)-f(x+z)| \, dx 
\end{equation}
for all $y,z \in \RR^N$. Pick $0<\varepsilon <1$ and find $\delta >0$ such that $|f(y)-f(z)| \leq \varepsilon$ for all $y,z \in \RR^N$ with $\|y-z\| \leq \delta$. Then the right hand side of (\ref{uniformly}) is smaller than or equal to $\varepsilon$ for all $y,z \in \RR^N$ with $\|y-z\| \leq \delta$ which proves that $T(t)f \in \UC(\RR^N)$. If, in addition, $f \in \Lip(\RR^N)$ with Lipschitz constant $K_f$, then the right hand side of (\ref{uniformly}) is smaller than or equal to $K_f  \|y-z\|$ for all $y,z \in \RR^N$ which proves that $T(t)f \in \Lip(\RR^N)$.

The semigroup property and positivity of the family $(T(t))_{t\geq 0}$ follow from its formula. We now prove that $(T(t))_{t\geq 0}$ is relatively uniformly continuous. To see this, fix $\varepsilon >0$ and set $C_N:=2  \cdot \frac{\Gamma \left (\frac{N+1}{2} \right)}{\Gamma \left (\frac{N}{2} \right)}$ where $\Gamma$ denotes the gamma function. By \Cref{1}, we can find $\delta >0$ such that 
$$|f(x+y)-f(y)|\leq   \frac{\varepsilon}{2} \cdot ( \|x\|\cdot  { \delta}^{-1}+1)$$
holds for all $x,y \in \RR^N$ from where it follows that
\begin{align*}
|(T(t)f)(y)-f(y)|&  \leq \frac{1}{(4 \pi t )^{N/2}} \int_ {\RR^N}^{} e^{-\frac{\|x\|^{2}}{4t}} \cdot  |f(x+ y) - f(y)| \, dx  \\
 & \leq \frac{1}{(4 \pi t )^{N/2}} \int_ {\RR^N}^{} e^{-\frac{\|x\|^{2}}{4t}} \cdot  \frac{\varepsilon}{2}  \cdot ( \|x\| \cdot  { \delta}^{-1} +1) \, dx   = \frac{\varepsilon}{2}   \cdot \left ( \sqrt{t}  \cdot  C_N \cdot { \delta}^{-1}  +1  \right)
\end{align*}
holds for all $t \geq 0$ and $y \in \RR^N$. Hence, we obtain 
\begin{align*}
|T(h)f-f| \leq \varepsilon \cdot \mathbbold{1}
\end{align*}
for all $h \in \left [0,  {C_N}^{-2} \cdot \delta^{2} \right]$. An application of \Cref{relatively uniformly continuous semigroup 0} concludes the proof.
\end{proof}

\section{Standard constructions}\label{constructions}

In this section we construct different relatively uniformly continuous semigroups from a given one. All the constructions are motivated by \cite[Chapter I.5.b]{Engel:00}. To prove that a given semigroup is relatively uniformly continuous we will tacitly use \Cref{relatively uniformly continuous semigroup 0}. For the sake of clarity, in this section $(T(t))_{t \geq 0}$ always denotes a given relatively uniformly continuous positive semigroup on a vector lattice $X$.

\begin{similar}
Let $V\colon Y \rightarrow X$ be a lattice isomorphism between vector lattices $X$ and $Y$. Then $\mathcal{S}:=(V^{-1}T(t) V)_{t \geq 0}$ is a relatively uniformly continuous positive semigroup on $Y$.
\end{similar}

\begin{proof}
It is easy to see that $\mathcal{S}$ is a positive semigroup on $Y$.
%Set $S(t):=V^{-1}T(t) V$  and $y \in Y$. For each $t \geq 0$ the operator $S(t)$ is positive on $Y$, since $V^{-1},T(t), V$ are positive. Also, $S(0) = V^{-1} I_X V= I_Y$ and$S(t)S(s)=V^{-1} T(t)T(s) V=V^{-1} T(s+t) V = S(t+s)$ holds for all $t, s \geq 0$, which prove that $(V^{-1}T(t) V)_{t \geq 0}$ is a positive semigroup on $X$.
%It is easy to see that $(V^{-1}T(t) V)_{t \geq 0}$ is a positive semigroup on $Y$ and hence, we only need to prove that $(V^{-1}T(t) V)_{t \geq 0}$ is relatively uniformly continuous on $Y$.
To prove that $\mathcal{S}$ is relatively uniformly continuous on $Y$, pick  $y \in Y$ and $\varepsilon >0$. Due to relative uniform continuity of $(T(t))_{t\geq 0}$ there exist $u \in X_+$ and $\delta >0$ such that
$$|T(h)Vy -Vy|_X \leq \varepsilon\cdot  u$$
holds for all $h \in [0,\delta]$. Since $V^{-1}\colon X\to Y$ is a lattice homomorphism, we obtain
\begin{align*}
|V^{-1}T(h)Vy-y|_Y&=%|V^{-1}T(h)Vy -V^{-1}Vy|=
V^{-1}|T(h)Vy -Vy|_X\leq \varepsilon \cdot (V^{-1}u)
\end{align*}
for all $h \in [0,\delta]$.
%Hence, by \Cref{relatively uniformly continuous semigroup 0} the semigroup $(S(t))_{t \geq 0}$ is relatively uniformly continuous on $Y$.
\end{proof}

Next we consider semigroups on quotient vector lattices.
Let $J$ be an ideal in $X$ and let $\pi\colon X\to X/J$ be the quotient projection between vector lattices $X$ and $X/J$. In order to guarantee that $X/J$ is Archimedean, we require our ideal $J$ to be relatively uniformly closed (see \cite[Theorem 5.1]{Luxemburg:67}). %Equip the set of all cosets $X / J$ with an order relation in such a way that it becomes a vector lattice and the corresponding quotient projection becomes a lattice homomorphism. The quotient projection is denoted by $\pi \ \colon X \rightarrow X/J$.

\begin{quotient}
Suppose  $J$ is a relatively uniformly closed ideal which is invariant under operator $T(t)$ for each $t \geq 0$. Then the family of operators $(\widetilde{T}(t))_{t \geq 0}$ defined by
\begin{equation*}
\widetilde{T}(t) \pi (x)= \pi (T(t)x)
\end{equation*}
for each $x \in X$ and $t \geq 0$ is a relatively uniformly continuous positive semigroup on $X / J$.
\end{quotient}

\begin{proof}
%Fix $x,y \in X$ such that $\pi (y) \geq \pi (x)$. There exist $x_1, y_1 \in X$ such that $$x_1 \sim x, \ y_1 \sim y, \ y_1 \geq x_1.$$Since $T(t)$ is an positive operator and $J$ is invariant under $T(t)$ for each $t \geq 0$ we obtain that$$T(t)x_1 \sim T(t)x, \ T(t)y_1 \sim T(t)y, \ T(t)y_1 \geq T(t)x_1.$$ Hence $\pi (T(t)y) \geq \pi (T(t)x)$. So, $\tilde{T}(t)$ is a positive operator on $X / J$.It is well-known that the operator $\tilde{T}(t)$ is positive on $X / J$. Furthermore, we obtain that $\tilde{T}(0)\pi (x)=\pi (T(0)x)=\pi (x)$ and $\tilde{T}(t)\tilde{T}(s)x =\pi (T(t)T(s)x)=\pi (T(s+t)x) = \tilde{T}(t+s)x$ holds for all $x \in X$ and $t,s \geq 0$, which proves that $(\tilde{T}(t))_{t \geq 0}$ is a semigroup on $X / J$.
It is easy to check that $(\widetilde{T}(t))_{t \geq 0}$ is a positive semigroup on $X / J$. To prove that $(\widetilde{T}(t))_{t \geq 0}$  is relatively uniformly continuous on $X / J$, pick $x \in X$ and $u\in X_+$ which regulates $T(h)x\goesru x$ as $h\searrow 0$. Pick $\varepsilon >0$ and find $\delta >0$ such that for all $h \in [0,\delta]$ we have
$|T(h)x -x| \leq \varepsilon \cdot u.$
Since $\pi $ is a lattice homomorphism, we obtain
$$|\widetilde{T}(h)\pi (x) -\pi(x)| = |\pi (T(h)x) -\pi(x)| = \pi( |(T(h)x -x| ) \leq \varepsilon \cdot \pi (u)$$
for all $h \in [0,\delta]$.
%, so that $(\widetilde{T}(t))_{t \geq 0}$ is relatively uniformly continuous on $X / J$  again by \Cref{relatively uniformly continuous semigroup 0}.
\end{proof}

The next standard construction on our list are   \emph{rescaled semigroups}.

\begin{rescaled}
For any numbers $\mu\in\RR$ and $\alpha>0$, the rescaled semigroup $(S(t))_{t\geq 0}$ defined by
$$S(t):=e^{\mu t}T(\alpha t)$$ is relatively uniformly continuous.
\end{rescaled}

\begin{proof}
A direct computation shows that $(S(t))_{t\geq 0}$ is a positive semigroup. To prove that $(S(t))_{t \geq 0}$  is relatively uniformly continuous on $X$, pick $x \in X$ and find $u\in X_+$ which regulates $T(h)x\goesru x$ as $h\searrow 0$.  Given any  $\varepsilon >0$, there
exists $\delta_1>0$ such that for all $h \in [0,\delta_1]$ we have
$|T(h)x -x| \leq \varepsilon \cdot u.$ Since the function $h\mapsto e^{\mu h}$ is continuous, there exists $\delta_2>0$ such that for all $h\in [0,\delta_2]$ we have $|e^{\mu t}-1|<\varepsilon.$ For $\delta:=\min\{\frac{\delta_1}{\alpha},\delta_2,1\}$ we obtain
\begin{align*}
|S(h)x-x|&\leq e^{\mu h}\cdot |T(\alpha h)x-x|+(e^{\mu h}-1)|x|\leq \varepsilon \cdot (e^{|\mu|}u+|x|)
\end{align*}
for each $0\leq h\leq \delta.$
\end{proof}

%The following proposition deals with product semigroups.
Next we deal with product semigroups. It is worth pointing out that the proof in our case is more complicated than the proof in the case of $C_0$-semigroups on Banach spaces.

\begin{product}
Let $(T(t))_{t \geq 0}$ and $(S(t))_{t \geq 0}$ be relatively uniformly continuous positive semigroups such that
$$T(t)S(t) = S(t)T(t)$$
holds for all $t \geq 0$.
Then $(T(t)S(t))_{t \geq 0}$ is a relatively uniformly continuous positive semigroup.
\end{product}

\begin{proof}
We prove first that $(T(t)S(t))_{t \geq 0}$ is a semigroup. As in \cite[I.5.15]{Engel:00} one can show that $T(q_1)S(q_2) = S(q_2)T(q_1)$ holds for all $q_1, q_2 \in \QQ_+$. Fix $t,s>0$ and $x \in X$. Find $u$ which regulates $T(t')x\goesru T(t)x$, $T(s')x\goesru T(s)x$, $T(t')S(s)x\goesru T(t)S(s)x$ and $S(s')T(t)x\goesru S(s)T(t)x$ as $t'\to t$ and $s'\to s$.  Pick $\varepsilon> 0$ and find  $0<\delta <1$ such that for all $t' \in [ t, t+\delta]$ and $s' \in [ s, s+\delta]$ we have
\begin{align*}
|T(t')x-T(t)x| \leq \tfrac{\varepsilon}{2} \cdot u,& \qquad |S(s')x-S(s)x| \leq \tfrac{\varepsilon}{2} \cdot u,\\
|T(t')S(s)x-T(t)S(s)x| \leq \tfrac{\varepsilon}{2} \cdot u,& \qquad |S(s')T(t)x-S(s)T(t)x| \leq \tfrac{\varepsilon}{2} \cdot u.
\end{align*}
 By \Cref{uniform bound}, we can find $v\in X_+$ such that
$$T(t') u \leq v \quad \text{ and } \quad S(s')u \leq v$$ hold for all $ t' \in [t, t+1] $ and $s' \in [s, s+1]$. Pick $t' \in [ t, t+\delta] \cap \QQ$ and $s' \in [s, s+\delta] \cap \QQ$. Since $X$ is Archimedean and for each $\varepsilon>0$ we have
\begin{align*}
|T(t)S(s)x-S(s)T(t)x| &\leq |T(t)S(s)x-T(t')S(s)x| + |T(t')S(s)x-T(t')S(s')x| \\
& + |T(t')S(s')x-S(s')T(t')x|+|S(s')T(t')x-S(s')T(t)x|\\
&+ |S(s')T(t)x-S(s)T(t)x|\\
& \leq \tfrac{\varepsilon}{2}\cdot u + T(t') |S(s)x-S(s')x| +S(s')|T(t')x-T(t)x| + \tfrac{\varepsilon}{2} \cdot u\\
& \leq \varepsilon \cdot u + \tfrac{\varepsilon}{2} \cdot ( T(t') u+ S(s')u ) \leq \varepsilon \cdot (u+v),
\end{align*}
we conclude $T(t)S(s)x=S(s)T(t)x.$ Since this holds for each $x\in X$, we obtain
$T(t)S(s)=S(s)T(t)$.
Now it is easy to deduce that $(T(t)S(t))_{t \geq 0}$ is a positive semigroup.

In order to prove that $(T(t)S(t))_{t\geq 0}$ is relatively uniformly continuous on $X$, we first find $u\in X_+$ such that for each $\varepsilon>0$ there exists $0< \delta<1$ such that
$$|T(h)x-x|\leq \varepsilon \cdot u \qquad \text{ and } \qquad |S(h)x-x|\leq \varepsilon \cdot u $$ hold for all $h\in [0,\delta].$ By \Cref{uniform bound}, there exists $ v \in X_+$ such that
$ T(h)u \leq v$ holds for all $h \in [0,1]$. By combining all of the above we see that
$$|T(h)S(h)x-x|\leq T(h)|S(h)x-x| + |T(h)x-x| \leq \varepsilon \cdot (v + u)$$
holds for all $h \in [0, \delta]$.
%By \Cref{relatively uniformly continuous semigroup 0}, in order to see that $(T(t)S(t))_{t \geq 0}$ is relatively uniformly continuous on $X$ it is enough to check \marg{1}$T(t)S(t)x \underset{}{\goesru } x$ as $t \searrow 0$. By assumption, there exist $u \in X_+$, independent of $\varepsilon$, and $0<\delta <1$ such that
%$$|T(h)x-x|\leq \varepsilon \cdot u \qquad \text{ and } \qquad |S(h)x-x|\leq \varepsilon \cdot u $$
%hold for all $h \in [0, \delta]$.
%By \Cref{uniform bound} there exists $ v \in X_+$ such that
%$\qquad T(h)u \leq v$ holds for all $h \in [0, 1]$. Then
%$$|T(h)S(h)x-x|\leq T(h)|S(h)x-x| + |T(h)x-x| \leq \varepsilon \cdot (v + u)$$
%holds for all $h \in [0, \delta]$. This proves the result.
\end{proof}

We finish this section with short comments on the \emph{subspace} and \emph{adjoint} semigroups.

By \cite[Chapter I.5.12]{Engel:00}, if $J$ is a subspace of a Banach space $Y$ and $(T(t))_{t \geq 0}$ is a $C_0$-semigroup on $Y$ such that $T(t)J \subset J$ for each $t \geq 0$, then the restrictions $\widetilde T(t):= T(t)|_{J}$ form a $C_0$-semigroup $(\widetilde T(t))_{t \geq 0}$ on $J$.
In general, such construction does not apply to relatively uniformly continuous semigroups as the following example shows.

\begin{ex}
Consider the (left) translation semigroup on $\C(\RR)$. It is obvious that every operator from the semigroup leaves the ideal $\C_b(\RR)$ in $\C(\RR)$ invariant. Since $\C_b(\RR)$ has an order unit, relative uniform convergence agrees with norm convergence, so that in this case the (left) translation semigroup is relatively uniformly continuous if and only if it is a $C_0$-semigroup. It is well-known, however, that the (left) translation semigroup on $\C_b(\RR)$ is not a $C_0$-semigroup. On the other hand, in the end of \Cref{Koopman} we will prove that the (left) translation semigroup is relatively uniformly continuous on $\C(\RR)$.
%, but it is known that this semigroup is not a $C_0$-semigroup on $\\C_b(\RR)$ and hence, it is not ru continuous on $\\C_b(\RR)$, while $\\C_b(\RR)$ is $ \tau_{ru}$-complete and an order dense ideal in $\\C(\RR)$.
\end{ex}

When comparing to Banach space case, the distinction  from the previous example appears when one wants to consider $\tau_{ru}$ on a sublattice $Y$ of a given lattice $X$. In the normed case, norm topology on the subspace agrees with the relative topology induced by the norm of the space. The relative topology on $Y$ induced by $\tau_{ru}$ from $X$ can be different than the relative uniform topology that $Y$ can induce on itself.

The situation with the adjoint semigroup is even subtler. If $X$ is a general vector lattice, probably the most natural candidate for the dual space is the order dual $X^\sim$ which, in fact, can be trivial (see  \cite[Theorem 25.1]{Zaanen:97}). If $X$ is a Banach lattice, then $X^\sim=X^*$ and so one can consider the adjoint semigroup on $X^*$. In the Banach space case, the most common example of a positive $C_0$-semigroup whose adjoint semigroup is not a $C_0$-semigroup is the (left) translation semigroup on $L^1(\RR)$. However, neither the (left) translation semigroup on $L^1(\RR)$ nor its adjoint semigroup which is the (right) translation semigroup on $L^\infty(\RR)$ is relatively uniformly continuous.

\section{The property $(D)$} \label{ru BS}

In this section we present an analogous version of \Cref{str cont on dense subset} and  \Cref{equicont to strongly continuous} for relatively uniformly continuous semigroups of positive operators. If one translates \Cref{str cont on dense subset} to the setting of $C_0$-semigroups on Banach spaces, one instantly realizes that without the principle of uniform boundedness, in general, there is no hope of having such a result. Therefore, we need to restrict ourselves to a particular case of vector lattices. We will say that such vector lattices have the \emph{property $(D)$} (see \Cref{property $(D)$ def}). Before we formally introduce this property and elaborate more on its connection to the principle of uniform boundedness in the classical setting of Banach spaces, we need to recall some basic notions and facts on relative uniform convergence and topology.

A set $D \subset X$ is \emph{ru-dense} if for each $x \in X$ there exists a sequence $(x_n)_{n \in \NN} \subset D$ such that $x_n \goesru x$. A similar argument as in the proof of \Cref{nets and sequences} shows that a set $D \subseteq X$ is ru-dense in $X$ if and only if for each $x\in X$ there is a net $(x_\alpha)$ in $D$ such that $x_\alpha\goesru x.$ Since $x_\alpha\goesru x$ implies $x_\alpha\goestru x$, a set $D\subseteq X$ is $\tau_{ru}$-dense in $X$ whenever it is ru-dense in $X$. The converse implication holds for vector lattices whose $\tau_{ru}$-topologies are completely sequential. Recall that $\tau_{ru}$-topology is said to be  \emph{completely sequential} if for any $S \subset X$ and any vector $x$ in the $\tau_{ru}$-closure of $S$ there exists a sequence in $S$ which converges relatively uniformly to $x$. Since in case of sequences $\tau_{ru}$ agrees with relative uniform $*$-convergence, it should be clear that $\tau_{ru}$-density in the completely sequential case implies ru-density. Before we close this short discussion on ru-density we would like to mention that $\tau_{ru}$ is completely sequential whenever $\tau_{ru}$ is first countable (see \cite[Theorem 2.1]{Moore:68}).

%While \Cref{str cont on dense subset} \marg{1}requires $\tau_{ru}$ to be a linear topology, to prove an analogous version of this result for ru continuous semigroups we present a different assumption on the space $X$ which is more related to $C_0$-semigroups on Banach spaces. There it is the \marg{1}principle of uniform boundedness which enables proving strong continuity of a semigroup by checking first that its operators are uniformly norm bounded on bounded time intervals and second that the orbit maps $\zeta_x$ are continuous on a dense set of elements $x \in X$; see again \cite[Proposition I.5.3]{Engel:00}. General vector lattices do not have such rich structure, but we can consider vector lattices which do. C. Swartz proved in \cite{Swartz:88} a Banach-Steinhaus result for relative uniform convergence for a class of vector lattices.
%Vector lattices with the following property play a crucial role in \Cref{ruc on dense to ruc on whole space} which is the main result of this section.

Now we are finally able to introduce the appropriate class of vector lattices which enables us to develop the remaining part of \cite[Proposition I.5.3]{Engel:00} for relatively uniformly continuous semigroups.

\begin{defn}\label{property $(D)$ def}
A vector lattice $X$ has property $(D)$ if for each net of linear operators $(T_\alpha)_{\alpha}$ on $X$ the following two assertions imply $T_\alpha x \goesru 0$ for each $x \in X$.
\begin{enumerate}
\item [\textup{(a)}] There exists an $ru$-dense subset $D \subset X$ such that $T_\alpha y \xrightarrow{ru} 0$ for each $y \in D$.
\item [\textup{(b)}] For each sequence $(x_n)_{n \in \NN} \subset X $ with $x_{n} \xrightarrow{ru} 0$ there exists $ u \in X_+$ such that for each $ \varepsilon >0$ there exist $N_\varepsilon \in \NN$ and $\alpha_\varepsilon $ such that $$|T_\alpha x_{n}| \leq \varepsilon \cdot u$$
holds for all $n \geq N_\varepsilon$ and $\alpha \geq\alpha_\varepsilon $.
\end{enumerate}
\end{defn}

Although \Cref{property $(D)$ def} at the first glance seems odd, it properly and nicely substitutes the principle of uniform boundedness from Banach spaces. Indeed, it can be easily shown that a sequence $(T_n)_{n\in\mathbb N}$ of operators on a Banach space $X$ converges pointwise to zero on $X$ if and only if it is uniformly bounded and it converges pointwise to zero on a dense subset of $X$. This is not true for nets since there are easy examples of norm unbounded nets of operators which converges pointwise to the zero operator.
If, however, in \Cref{property $(D)$ def} we replace our net $(T_\alpha)_\alpha$ with a $C_0$-semigroup $(T(t))_{t\geq 0}$ on a Banach space, then \cite[Proposition I.5.3]{Engel:00} yields that the semigroup $(T(t))_{t\geq 0}$ is norm bounded on time bounded intervals which relates to (b) from \Cref{property $(D)$ def}.

The main result of this section is \Cref{ruc on dense to ruc on whole space} which is an analogous version of \Cref{equicont to strongly continuous} for relatively uniformly continuous semigroups. Before we prove it, we will discuss two different classes of vector lattices which satisfy property $(D)$. One of them is the class of  completely metrizable locally solid vector lattices and the other one is the class of  vector lattices which satisfies condition $(R)$. Following \cite[Definition VI.5.1]{Vulikh:67}, a vector lattice $X$ satisfies \emph{condition $(R)$} whenever for each sequence $(u_{n})_{n \in \NN} \subset X_+$ there exists a sequence of positive scalars $(\lambda_{n})_{n \in \NN}$  such that $(\lambda_{n}u_{n})_{n \in \NN}$  is order bounded.  Vulikh introduced condition $(R)$ in order to study  order convergence.  From the proof of
\cite[Theorem  VI.5.2]{Vulikh:67} Swartz \cite{Swartz:88}
extracted the following property. A vector lattice $X$ is said to have \emph{property $(C)$} whenever each countable set of relatively uniformly convergent sequences has a common regulator. Due to our best knowledge, it seems that it remained unnoticed in the literature that property $(C)$ and condition $(R)$ are equivalent.
%
%B. Z. Vulikh introduced in \cite[Definition VI.5.1]{Vulikh:67}  \marg{This is not what Vulikh introduced!!!} the notion of regular vector lattices. A vector lattice $X$ is \emph{regular} if for each sequence $(u_{n})_{n \in \NN} \subset X_+$ there exists a sequence of positive scalars $(\lambda_{n})_{n \in \NN}$ such that $(\lambda_{n}u_{n})_{n \in \NN}$  is order bounded in $X$. By \cite[Theorem VI.4.1]{Vulikh:67} and \cite[Definition VI.4.1]{Vulikh:67}, on Dedekind complete regular vector lattices order convergence is equivalent to relative uniform convergence and in \marg{1}\cite[Theorem VI.5.2]{Vulikh:67} he showed that such vector lattices have the property that

\begin{prop}\label{property (C) iff regular}
A vector lattice $X$ has property $(C)$ if and only if $X$ satisfies condition $(R)$.
\end{prop}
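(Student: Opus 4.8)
The plan is to prove both implications directly from the definitions, in each case by manufacturing a single dominating vector out of the given data and then verifying that it works by a rescaling-of-tolerances argument.

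For the implication that condition $(R)$ implies property $(C)$, I would start with a countable family of relatively uniformly convergent sequences $x^{(k)}_n \goesru x^{(k)}$ $(k \in \NN)$, each with its own regulator $u_k \in X_+$. Applying condition $(R)$ to the sequence $(u_k)_{k \in \NN}$ produces positive scalars $(\lambda_k)_{k \in \NN}$ and a vector $u \in X_+$ with $\lambda_k u_k \leq u$ for all $k$. I then claim that $u$ is a common regulator for the whole family. Fixing $k$ and $\varepsilon > 0$, I would invoke the fact that $u_k$ regulates the $k$-th sequence with tolerance $\varepsilon \lambda_k$: there is $N$ with $|x^{(k)}_n - x^{(k)}| \leq \varepsilon \lambda_k u_k \leq \varepsilon u$ for all $n \geq N$. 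This is precisely the assertion that $u$ regulates every sequence in the family, so $X$ has property $(C)$.

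For the converse, given a sequence $(u_n)_{n \in \NN} \subset X_+$ I would build, for each $n$, the auxiliary sequence $z^{(n)}_m := \frac{1}{m} u_n$, which converges relatively uniformly to $0$ with regulator $u_n$. Property $(C)$ then furnishes a common regulator $u \in X_+$ for the countable family $\{(z^{(n)}_m)_m : n \in \NN\}$. Unwinding the definition of regulator with the choice $\varepsilon = 1$, for each $n$ there is $M_n \in \NN$ with $\frac{1}{M_n} u_n \leq u$; setting $\lambda_n := M_n^{-1}$ yields $\lambda_n u_n \leq u$ for every $n$, so the sequence $(\lambda_n u_n)_n$ is order bounded, which is exactly condition $(R)$.

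Neither direction presents a serious obstacle; the only point requiring genuine care is the choice of the auxiliary sequences $z^{(n)}_m = \frac{1}{m} u_n$ in the second implication, since this is what converts the \emph{common regulator} output of property $(C)$ into the \emph{common bound after rescaling} demanded by condition $(R)$. The tolerance-rescaling step in the first implication is the mirror image of the same idea, so once one direction is laid out the other follows by symmetry of the construction.
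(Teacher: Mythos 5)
Your proof is correct and follows essentially the same route as the paper's: the direction from condition $(R)$ to property $(C)$ rescales the tolerance so that the common bound $u \geq \lambda_k u_k$ becomes a common regulator, and the converse uses exactly the auxiliary sequences $\frac{1}{m}u_n \goesru 0$ to extract the scalars $M_n^{-1}$. No gaps.
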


\begin{proof}
($\Rightarrow$) Fix a sequence $(u_{n})_{n \in \NN} \subset X_+ $ and for each $n \in \NN$ define the relatively uniformly converging sequence $(x_{m}^{(n)})_{m \in \NN}$ by $x_m^{(n)}:=\frac{1}{m}u_n$. Since $X$ has property $(C)$, there exists a positive vector $u\in X_+$ such that for each $n\in\NN$ the vector $u$ regulates the convergence $x_{m}^{(n)}\goesru 0$ as $m\to\infty$. Hence, for each $n \in \NN$ there exists $M_n\in \NN$ with $$\frac{1}{M_n}u_n \leq u.$$ This proves that $X$ satisfies condition $(R)$.

($\Leftarrow$) Here we follow the proof of \cite[Theorem VI.5.2]{Vulikh:67}. Fix sequences $(x_{n})_{n \in \NN}, (u_{n})_{n \in \NN} \subset X $ and a double sequence $(x_{n,m})_{n, m \in \NN} \subset X $ such that for each $n\in\NN$ we have $x_{n,m} \goesru x_n$ as $m \rightarrow \infty$ with respect to some regulator $u_n$.
Since $X$ satisfies condition $(R)$, there exists a sequence of positive scalars $(\lambda_{n})_{n \in \NN} $ and $u\in X_+$ such that $\lambda_{n}u_{n}\leq u$ for each $n\in\NN$. Since $x_{n,m}\goesru x_n$ as $m\to\infty$ is regulated by $\lambda_nu_n$, it is also regulated by $u$.
\end{proof}

Property (C) also holds for nets.
The following corollary follows directly from \Cref{property (C) iff regular}.

\begin{cor}\label{property (C) sequence to nets}
A vector lattice $X$ has \emph{property $(C)$} if and only if any countable set of relatively uniformly convergent nets in $X$ has a common regulator.
\end{cor}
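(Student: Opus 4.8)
The plan is to prove \Cref{property (C) sequence to nets} by reducing the net version of property $(C)$ to the sequence version, which \Cref{property (C) iff regular} has already shown to be equivalent to condition $(R)$. The key observation is that a single relatively uniformly convergent net does not carry more regulating information than a suitable extracted sequence: if $x_\alpha \goesru x$ with regulator $u$, then for every $n \in \NN$ one can pick an index $\alpha_n$ with $|x_{\alpha_n} - x| \leq \frac{1}{n} \cdot u$, producing a sequence that converges relatively uniformly to the same limit and is regulated by the \emph{same} $u$. Thus extracting a sequence from each net preserves regulators, and a common regulator for the extracted sequences should transfer back to the nets.

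First I would prove the nontrivial (``only if'') direction. Suppose $X$ has property $(C)$ and let $\{(x_\alpha^{(k)})_\alpha : k \in \NN\}$ be a countable family of relatively uniformly convergent nets, say $x_\alpha^{(k)} \goesru x^{(k)}$ with regulator $u_k$. For each $k \in \NN$ and each $m \in \NN$, choose an index $\alpha_{k,m}$ so that $|x_{\alpha_{k,m}}^{(k)} - x^{(k)}| \leq \frac{1}{m} \cdot u_k$; then for fixed $k$ the sequence $(x_{\alpha_{k,m}}^{(k)})_{m \in \NN}$ converges relatively uniformly to $x^{(k)}$ with regulator $u_k$. Applying property $(C)$ for sequences to this countable collection of relatively uniformly convergent sequences yields a common regulator $u \in X_+$. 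The point is that the regulator $u_k$ can always be replaced by $u$: since $u$ regulates the extracted sequence, for each $\varepsilon > 0$ eventually $|x_{\alpha_{k,m}}^{(k)} - x^{(k)}| \leq \varepsilon \cdot u$, and because $u_k \leq$ (a scalar multiple of) $u$ is exactly what a common regulator delivers, the original net $x_\alpha^{(k)} \goesru x^{(k)}$ is also regulated by $u$. Hence $u$ is a common regulator for all the nets.

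The cleanest way to make the transfer rigorous, and the step I expect to be the main obstacle, is to argue at the level of regulators rather than sequences: I would invoke \Cref{property (C) iff regular} to pass to condition $(R)$, apply $(R)$ directly to the sequence of net-regulators $(u_k)_{k \in \NN} \subset X_+$ to obtain positive scalars $(\lambda_k)_{k \in \NN}$ and $u \in X_+$ with $\lambda_k u_k \leq u$ for all $k$, and then observe that a net regulated by $u_k$ is automatically regulated by $u$ since $u_k \leq \lambda_k^{-1} u$. This is the subtle point, because one must be careful that condition $(R)$ and property $(C)$ are formulated for sequences, so the net structure enters only through the single regulator $u_k$ attached to each net; the net itself plays no further role once its regulator is isolated. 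The converse (``if'') direction is immediate, since sequences are a special case of nets, so a family enjoying a common regulator as nets trivially has one as sequences. This completes the equivalence.
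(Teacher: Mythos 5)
Your final argument---passing to condition $(R)$ via \Cref{property (C) iff regular}, applying $(R)$ to the sequence of net-regulators $(u_k)_{k\in\NN}$ to get $\lambda_k u_k \leq u$, observing that a net regulated by $u_k$ is then regulated by $u$, and noting the converse is trivial since sequences are nets---is correct and is exactly the route the paper takes (it states the corollary ``follows directly'' from that proposition). The only caveat is that your first-paragraph heuristic of extracting sequences and invoking property $(C)$ for them would not suffice on its own, because a common regulator of the extracted sequences need not dominate the original regulators $u_k$ and hence need not regulate the full nets; but you correctly discard that route in favor of the regulator-level argument.
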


It is easy to see that every vector lattice with an order unit has property $(C)$.
\Cref{implies property $(D)$} shows that the class of vector lattice  which have property $(D)$ is quite big. Apart to vector lattices with order units, it also contains the class of completely metrizable locally solid vector lattices.

\begin{thm}\label{implies property $(D)$}
For a vector lattice $X$ consider the following assertions.
\begin{enumerate}
\item [\upshape{(i)}] There exists a topology $\tau$ on $X$ such that $(X,\tau)$ is completely metrizable locally solid vector lattice.
\item [\upshape{(ii)}] $X$ has property $(C)$.
\item [\upshape{(iii)}] $X$ has property $(D)$.
\end{enumerate}
Then
\upshape{$$\text{(i)} \Rightarrow \text{(ii)} \Rightarrow \text{(iii)}.$$}
\end{thm}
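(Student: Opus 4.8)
The plan is to prove the chain by establishing $(i) \Rightarrow (ii)$ directly through condition $(R)$ and then $(ii) \Rightarrow (iii)$ by a common-regulator argument. For $(i) \Rightarrow (ii)$, by \Cref{property (C) iff regular} it suffices to verify that a completely metrizable locally solid vector lattice $(X,\tau)$ satisfies condition $(R)$. First I would fix a sequence $(u_n)_{n \in \NN} \subset X_+$ and a translation-invariant metric $d$ generating $\tau$. Since scalar multiplication is $\tau$-continuous, for each $n$ one can choose $\mu_n > 0$ so small that $d(\mu_n u_n, 0) \le 2^{-n}$; set $y_n := \mu_n u_n \ge 0$. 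The first key step is to show the increasing partial sums $s_N := \sum_{n=1}^N y_n$ are $d$-Cauchy: translation invariance yields the subadditivity $d(a+b,0) \le d(a,0) + d(b,0)$, whence $d(s_{N+k}, s_N) \le \sum_{n > N} 2^{-n} = 2^{-N}$.

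By completeness $s_N \goestau v$ for some $v \in X$. The second key step is to observe that $v$ is an upper bound of all the $s_N$: since $(X,\tau)$ is Hausdorff and locally solid, the cone $X_+$ is $\tau$-closed, so from $s_N - s_M \ge 0$ (for $N \ge M$) and $s_N - s_M \goestau v - s_M$ we conclude $v - s_M \ge 0$ for every $M$. Hence $v \ge s_n \ge y_n = \mu_n u_n$ for all $n$, so $(\mu_n u_n)$ is order bounded by $v$, and condition $(R)$ holds with $\lambda_n := \mu_n$.

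For $(ii) \Rightarrow (iii)$, let $(T_\alpha)_\alpha$ satisfy (a) and (b) and fix $x \in X$. Using $ru$-density of $D$, I would choose $(y_n) \subset D$ with $y_n \goesru x$, so that $x_n := x - y_n \goesru 0$. Applying (b) to $(x_n)$ produces a regulator $u \in X_+$ controlling $|T_\alpha(x - y_n)|$ for large $n$ and $\alpha$. Independently, each net $(T_\alpha y_n)_\alpha$ converges relatively uniformly to $0$ by (a); the crucial use of property $(C)$ — in its net form \Cref{property (C) sequence to nets} — is to extract a single common regulator $w \in X_+$ for this countable family of nets. Then, given $\varepsilon > 0$, I select the single index $n_\varepsilon := N_\varepsilon$ furnished by (b) and combine $|T_\alpha(x - y_{n_\varepsilon})| \le \varepsilon u$ with $|T_\alpha y_{n_\varepsilon}| \le \varepsilon w$, both valid for $\alpha$ beyond a common threshold, to obtain $|T_\alpha x| \le \varepsilon(u + w)$. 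Since $u + w$ is independent of $\varepsilon$, this shows $T_\alpha x \goesru 0$.

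The main obstacle will be the bookkeeping in $(ii) \Rightarrow (iii)$: the estimate for $T_\alpha x$ must hold with a regulator independent of $\varepsilon$ even though the approximating index $n_\varepsilon$ runs through infinitely many values as $\varepsilon \searrow 0$. This is precisely what forces the \emph{common} regulator $w$ out of property $(C)$ rather than the individual regulators from (a), and pairing it with the single-index selection $n_\varepsilon = N_\varepsilon$ is the decisive trick. In $(i) \Rightarrow (ii)$ the delicate point is instead the passage from $\tau$-convergence of the increasing sequence $(s_N)$ to an order bound, which rests on the $\tau$-closedness of the positive cone in a Hausdorff locally solid vector lattice.
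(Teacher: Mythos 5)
Your proposal is correct and follows essentially the same route as the paper: for (i)$\Rightarrow$(ii) the paper also reduces to condition $(R)$ via \Cref{property (C) iff regular} and sums a series $\sum \lambda_n u_n$ made summable by shrinking scalars (using a solid neighborhood basis with $V_{n+1}+V_{n+1}\subset V_n$ where you use a translation-invariant metric, an equivalent device), concluding the order bound from completeness and positivity of the limit; for (ii)$\Rightarrow$(iii) your argument, including the decisive pairing of the single regulator $u$ from (b) with a common regulator $w$ from the net form of property $(C)$ evaluated at the one index $N_\varepsilon$, is exactly the paper's proof.
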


\begin{proof}
(i)$\Rightarrow$(ii) By \Cref{property (C) iff regular}, it is enough to show that $X$ satisfies condition $(R)$. Since $(X,\tau)$ is completely metrizable there exists a countable neighborhood basis $\{ V_n\}_{n \in \NN}$ of zero in $(X,\tau)$ consisting of solid sets such that for each $n \in \NN$ we have $V_{n+1} + V_{n+1} \subset V_n$. Fix a sequence $(u_{n})_{n \in \NN} \subset X_+ $ and for each $n \in \NN$ pick $\lambda_n$ such that $\lambda_n u_n \in V_n$. We claim that the series $\sum_{i=1}^{\infty} \lambda_i u_{i}$ $\tau$-converges in $X$. Define $ s_n =\sum_{i=1}^{n} \lambda_i u_{i}$ for each $n \in \NN$ and pick a solid neighborhood $V_0$ of zero in $(X,\tau)$. Find $n_0 \in \NN$ such that $V_{n_0} \subset V_0$. Then for $m > n \geq n_0$ we have
$$s_m - s_n = \lambda_{n+1} u_{n+1} + \dots + \lambda_{m} u_{m} \in V_n \subset V_{n_0} \subset V_0,$$
and hence, the partial sums $(s_n)_{n \in \NN}$ of the series $\sum_{i=1}^{\infty} \lambda_i u_{i}$ form a Cauchy sequence in $(X,\tau)$. Since $(X,\tau)$ is complete and Hausdorff, by \cite[Theorem 2.21]{Aliprantis:03}, the series $\sum_{i=1}^{\infty} \lambda_i u_{i}$ converges in $X$ to some positive vector $u$. Now it is clear that for each $n\in\NN$ we have $\lambda_n u_{n} \leq u $ and so  $X$ satisfies condition $(R)$.

(ii)$\Rightarrow$(iii) Suppose that a net of linear operators $(T_\alpha)_\alpha$ on $X$ and an ru-dense subset $D\subset X$ satisfy  (a) and (b) from \Cref{property $(D)$ def}. We need to prove that $T_\alpha x\goesru 0$ for each $x\in X$.

Pick $x\in X$ and find $(x_n)_{n \in \NN} \subset D$ such that $x_n \xrightarrow{ru} x$. By (b), there exists
$u \in X_+$ such that for each $ \varepsilon >0$ there exists $N_\varepsilon \in \NN$ and $\alpha_\varepsilon$ such that $$|T_\alpha (x_{n}-x)| \leq \varepsilon \cdot u$$
holds for all $n \geq N_\varepsilon$ and $\alpha \geq\alpha_\varepsilon $. Since for each $n\in\NN$ the net $(T_\alpha x_{n})_{\alpha}$ converges relatively uniformly to $0$ and since $X$ has property $(C)$, by \Cref{property (C) sequence to nets}, there exists a positive vector $\widetilde u\in X$ which regulates  the convergence $T_\alpha x_n \xrightarrow{ru} 0$ for each $n\in\NN$. Now, find $\alpha_1$ such that for all $\alpha\geq \alpha_1$ we have $|T_\alpha x_{N_\varepsilon}|\leq \varepsilon \cdot \widetilde u.$ Find any $\alpha_0$ which is greater than or equal to $\alpha_\varepsilon$ and $\alpha_1.$ If $\alpha\geq \alpha_0$, then
\[|T_\alpha x|\leq |T_\alpha (x-x_{N_\varepsilon})|+|T_\alpha x_{N_\varepsilon}| \leq \varepsilon \cdot (u+ \widetilde u).\qedhere\]
\end{proof}

At first glance it may seem that we do not require property $(C)$ in its entirety. The problem is that by changing the value $\varepsilon$, also the integer $N_\varepsilon$ changes which unfortunately forces the vector $\widetilde u=\widetilde u(\varepsilon)$ to change itself. In this case, we cannot conclude that the vector $u+\widetilde u$ is a regulator of our convergence $T_\alpha x\goesru 0$.

The following two examples show that the implications from  \Cref{implies property $(D)$}, in general, cannot be reversed.

\begin{ex}
By \Cref{1}, the function $u\colon x \mapsto 1+|x|$ is an order unit of $\Lip(\RR)$ and hence, $\Lip(\RR)$ has  property $(C)$. If there would exist a complete metrizable locally solid topology $\tau$ on $\Lip(\RR)$, then $\tau=\tau_{ru}$ by \Cref{topology examples}.
Since $u$ is an order unit for $\Lip(\RR)$, $\tau_{ru}$ agrees with norm topology induced by $\|\cdot\|_u$. In order to reach a contradiction, we will show that the normed space $(\Lip(\RR),\|\cdot\|_u)$ is not complete.

 It is clear that each function $f_n\colon \RR\to \RR$ defined as $f_n(x)=\sqrt{|x|+\frac{1}{n}}$ is in $\Lip(\RR)$.
 A direct calculation shows that the sequence $(f_n)_{n\in \mathbb N}$ is Cauchy in $(\Lip(\RR),\|\cdot\|_u)$ and $f_n\xrightarrow{\|\cdot\|_u}f$ where $f(x)=\sqrt{|x|}.$ Since $f\notin \Lip(\RR)$, we conclude $(\Lip(\RR),\|\cdot\|_u)$ is not complete.
\end{ex}

\begin{ex}
The vector lattice $\C_c(\RR)$ has the
property $(D)$, yet it does not have property $(C)$.

To show that $\C_c(\RR)$ does not have property $(C)$, it suffices to check that $\C_c(\RR)$ does not satisfy the equivalent condition $(R)$. Pick any sequence $(f_n)_{n \in \NN} \subset \C_c(\RR)$ with $f_n=1$ on $[-n,n]$. It is easy to check that for any choice of positive scalars  $(\lambda_n)_{n \in \NN} \subset \RR$ the sequence $(\lambda_n f_n)_{n \in \NN}$ is not order bounded in $\C_c(\RR)$.

To show that $\C_c(\RR)$ has property $(D)$, suppose that (i) and (ii) from \Cref{property $(D)$ def} are satisfied for some ru-dense subset $D\subset \C_c(\RR)$ and some net $(T_\alpha)_\alpha$ of linear operators on $\C_c(\RR)$. We need to prove that $T_\alpha f\goesru 0$ for each $f\in \C_c(\RR)$.

Pick $f \in \C_c(\RR)$ and find $(f_n)_{n \in \NN} \subset D$ such that $f_n \goesru f$.
Pick arbitrary $\varepsilon>0$. First, by applying (ii), we find $N_\varepsilon \in \NN$ and $\alpha_\varepsilon$ such that $$|T_\alpha (f-f_{n})| \leq \frac{\varepsilon}{2\|u\|_\infty} \cdot u$$ holds for all $\alpha \geq \alpha_\varepsilon$ and $n\geq N_\varepsilon$. Next, by (i), there exist $\widetilde u\in \C_c(\RR)$ and $\alpha_1$ such that
$$|T_\alpha f_{N_\varepsilon}| \leq \frac{\varepsilon}{2\|\widetilde u\|_\infty} \cdot \widetilde u $$ holds for all $\alpha \geq \alpha_1$.
It is tempting to proceed with the same argument as in the proof of  (b)$\Rightarrow$(c) of \Cref{implies property $(D)$}, however since $\C_c(\RR)$ does not have property $(C)$ each choice of $\varepsilon$ provides a possibly different $N_\varepsilon$, and therefore a possibly different $\widetilde u.$

By \Cref{compact support ru}, it suffices to show that $T_\alpha f\xrightarrow{\|\cdot\|_\infty}0$ and that there exists a compact set $K$ and an index $\beta$ such that for each $\alpha\geq \beta$ the function $T_\alpha f$ vanishes  outside $K$. To see this, pick any $\alpha_0 \geq \alpha_\varepsilon, \alpha_1$ and observe that
$$|T_\alpha f|\leq |T_\alpha (f-f_{N_\varepsilon})|+|T_\alpha f_{N_\varepsilon}|\leq \frac{\varepsilon}{2\|u\|_\infty}\cdot u+\frac{\varepsilon}{2\|\widetilde u\|_\infty} \cdot \widetilde u$$
holds for all $\alpha \geq \alpha_0$. This yields $T_\alpha f\xrightarrow{\|\cdot\|_\infty}0$
and for each $\alpha\geq \alpha_0$ the function $T_\alpha f$ vanishes outside the union of supports of $u$ and $\widetilde u$. This finally proves the claim.
\end{ex}

In conclusion, the class of vector lattices which have property $(D)$ contains at least vector lattices such as $L^p(\RR)$ $(0<p<\infty)$, $\C_c(\RR)$, $\Lip(\RR)$, $\UC(\RR)$, and $\C(\RR)$ which are examples of very important spaces where one wants to solve (partial) differential equations.

The following theorem is the main result of this section. It is a version of \Cref{equicont to strongly continuous} for relatively uniformly continuous semigroups and it will be applied in the following section.
Furthermore, this result is of fundamental importance to the proof of \cite[Theorem 5.4]{Kaplin:18} which is a Hille-Yosida type result for relatively uniformly continuous semigroups.
\begin{thm}\label{ruc on dense to ruc on whole space}
Let $X$ have property $(D)$ and $(T(t))_{t\geq 0}$ be a positive semigroup on $X$. Then $(T(t))_{t\geq 0}$ is relatively uniformly continuous on $X$ if and only if the following two assertions hold.
\begin{enumerate}
\item [\textup{(i)}] There exists an $ru$-dense subset $D \subset X$ such that $T(h)y \goesru y$ as $h \searrow 0$ for each $y \in D$.
\item [\textup{(ii)}] For each $ s \geq 0$ and $x \in X$ the set $$\{|T(t)x| \ \colon \; 0\leq t\leq s\}$$ is order bounded in $X$.
\end{enumerate}
\end{thm}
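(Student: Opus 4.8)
The plan is to treat the two implications separately: the forward one follows immediately from results already proved, while the backward one is the substantive part, carried entirely by property $(D)$. For the forward implication I would assume $(T(t))_{t\geq 0}$ is relatively uniformly continuous. Then (i) holds trivially by taking $D=X$, which is $ru$-dense, since relative uniform continuity at $s=0$ already gives $T(h)x\goesru x$ as $h\searrow 0$ for every $x\in X$. Having noted this convergence, assertion (ii) is exactly the conclusion of \Cref{uniform bound}, whose hypothesis is precisely that convergence, so no new work is required.

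For the backward implication I would assume (i) and (ii). By \Cref{relatively uniformly continuous semigroup 0} it suffices to prove $T(h)x\goesru x$ as $h\searrow 0$ for every positive $x\in X_+$; in fact I would establish it for every $x\in X$. The key device is to view the family $S_h:=T(h)-I$, for $h\in(0,1]$, as a net of linear operators indexed by the directed set $((0,1],\preceq)$, where $h_1\preceq h_2$ means $h_1\geq h_2$. With this convention $\min$ supplies upper bounds, so the index set is directed, and net convergence of $(S_h x)$ to $0$ is precisely the statement $T(h)x\goesru x$ as $h\searrow 0$. I would then verify the two hypotheses of \Cref{property $(D)$ def} for this net and invoke property $(D)$. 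Hypothesis (a) is immediate from (i): the $ru$-dense set $D$ supplied there satisfies $S_h y\goesru 0$ as $h\searrow 0$ for each $y\in D$.

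The heart of the argument is verifying hypothesis (b), and this is where positivity together with (ii) enters. Given a sequence $(x_n)_{n\in\NN}$ with $x_n\goesru 0$, I would fix a regulator $w\in X_+$, so that for each $\varepsilon>0$ there is $N_\varepsilon$ with $|x_n|\leq \varepsilon\cdot w$ for all $n\geq N_\varepsilon$. By (ii) the set $\{|T(t)w|\colon 0\leq t\leq 1\}$ is order bounded, say by $v\in X_+$; since $w\geq 0$ and each $T(h)$ is positive, $0\leq T(h)w\leq v$ for all $h\in[0,1]$. Then, using $|T(h)y|\leq T(h)|y|$ and monotonicity of positive operators,
\[
|S_h x_n|\leq T(h)|x_n|+|x_n|\leq \varepsilon\cdot(T(h)w+w)\leq \varepsilon\cdot(v+w)
\]
holds for all $n\geq N_\varepsilon$ and all $h\in(0,1]$, so $u:=v+w\in X_+$ witnesses (b). Property $(D)$ then yields $S_h x\goesru 0$, that is $T(h)x\goesru x$ as $h\searrow 0$, for every $x\in X$, and \Cref{relatively uniformly continuous semigroup 0} completes the proof.

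The main obstacle I anticipate is conceptual rather than computational: one must recast the one-sided limit $h\searrow 0$ as a genuine net over a directed set so that \Cref{property $(D)$ def} applies verbatim, and then produce the single regulator $u=v+w$ required in (b) \emph{independently of} $\varepsilon$ — which is exactly what order boundedness on bounded time intervals (assertion (ii)) buys us. Everything else reduces to the elementary estimates $|T(h)y|\leq T(h)|y|$ and monotonicity for positive operators.
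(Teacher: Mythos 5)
Your proposal is correct and follows essentially the same route as the paper's proof: the forward direction via \Cref{uniform bound} with $D=X$, and the backward direction by applying property $(D)$ to the net $T(h)-I$, verifying hypothesis (b) with the same estimate $|T(h)x_n - x_n|\leq T(h)|x_n|+|x_n|\leq \varepsilon\cdot(v+w)$, where $v$ comes from assertion (ii) applied to the regulator on the time interval $[0,1]$. The only cosmetic difference is that you spell out the directed-set structure on $(0,1]$ explicitly, which the paper leaves implicit.
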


\begin{proof}
($\Rightarrow$) If $(T(t))_{t\geq 0}$ is a positive relatively uniformly continuous semigroup on $X$, then (i) obviously holds for $D=X$ and (ii) follows from \Cref{uniform bound}.

$(\Leftarrow$) Fix $x \in X$ and define a net of linear operators $(T_h)_{h \in [0, 1]}$ on $X$ by $T_h:=T(h)-I$.
By \Cref{relatively uniformly continuous semigroup 0}, it suffices to prove that $T_hx\goesru 0$ as $h\searrow 0.$ Since $X$ has property $(D)$, it suffices to check (a) and (b) of \Cref{property $(D)$ def}.

Clearly (a) holds by assumption (i).  To check (b), fix a sequence $(x_n)_{n \in \NN}$ such that $x_n\goesru 0$ with respect to some regulator $u \in X_+$. By assumption (ii), we can find $ v \in X_+$ such that
$T(h)u \leq v$
for all $h \in [0, 1]$. Hence, for each $\varepsilon >0$ there exists $N_\varepsilon \in \NN$ such that for all $n\geq N_\epsilon$ and $h\in [0,1]$ we have
\[|T_h x_n| \leq T(h)| x_n|+ |x_n| \leq \varepsilon \cdot T(h)u + \varepsilon \cdot u \leq \varepsilon \cdot (v + u ).\qedhere \]
\end{proof}

\section{Koopman semigroups on $\C(\RR)$}\label{Koopman}
In this section we define continuous semiflows and the corresponding Koopman semigroups on $\C(\RR)$. By applying \Cref{ruc on dense to ruc on whole space}, we show that relative uniform continuity of such semigroups on $\Lip(\RR)$, $\UC(\RR)$ and $\C(\RR)$ can be characterized through continuity properties of their semiflows. In particular, we will show that the (left) translation semigroup is relatively uniformly continuous on these spaces. %To this end we will introduce the space of locally piecewise affine functions on $\RR$.

A function $\varphi  \colon \RR_+ \times \RR \rightarrow \RR$ is called a continuous \emph{semiflow} if it is continuous in the second variable and satisfies
\begin{equation}\label{semiflows}
\varphi (0 ,x)=x \qquad \text{and} \qquad \varphi (t+s,x)=\varphi(t, \varphi(s,x))
\end{equation}
for each $x \in \RR$ and $t,s \geq 0$. 
To each semiflow $\varphi$ we associate the family of operators $\mathcal{T}_\varphi = (T_\varphi(t))_{t\geq 0}$ on $\C(\RR)$ given by
$$(T_\varphi(t)f)(x)=f(\varphi (t,x))$$
for each $x \in \RR$ and $t \geq 0$. We call such a semigroup a \emph{Koopman semigroup}.
%If $\varphi$ is defined on $\RR \times \RR$ and satisfies \Cref{semiflow} for all $t,s \in \RR$, then we call it a \emph{flow} and the associated group $(T_\varphi(t))_{t \in \RR}$ a \emph{Koopman group}. 
For example, the semigroup associated with the semiflow $(t,x) \mapsto t+x$ is the (left) translation semigroup. 
%\begin{rem}\label{Koopman semigroup on sublatice}If $\varphi$ is a (semi)flow and $X$ a vector sublattice of $\C(\RR)$, which is invariant under the operators of the Koopman (semi)group $T_\varphi$, then $T_\varphi$ is a positive (semi)group on $X$.\end{rem}

The following lemma, which is of independent interest, will be used to prove \Cref{3}.  For the sake of completeness we include the proof.
\begin{lemma}\label{thy lemma}
Let $\varphi  \colon \RR_+ \times \RR \rightarrow \RR$ be a semiflow and assume that there exists $u \in \C(\RR)$ such that for each $\varepsilon >0$ there exists $\delta >0$ such that
\begin{equation}\label{hjhjhjhjj}
|\varphi(h,z)-z| \leq \varepsilon \cdot u(z)
\end{equation}
holds for all $h \in [0, \delta]$ and $z \in \RR$. Then $\varphi$ is jointly continuous and  for each $f \in \C(\RR)$ and $s \geq 0$ the function $x \mapsto g_{f,s}(x):= \max_{t \in [0,s]}|f(\varphi(t,x))|$ is continuous.
\end{lemma}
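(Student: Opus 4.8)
The plan is to prove the two assertions in turn, obtaining joint continuity of $\varphi$ first and deducing the continuity of $g_{f,s}$ from it by a compactness argument. The crux of the whole lemma is that the hypothesis \eqref{hjhjhjhjj} is self-referential: the bound $|\varphi(h,z)-z|\le\varepsilon\cdot u(z)$ is only useful where $u$ is controlled, and a priori nothing prevents an orbit $s\mapsto\varphi(s,x_0)$ from escaping into a region where $u$ is huge. I would therefore begin by showing that orbits stay bounded on compact time intervals. Fixing $\varepsilon=1$ in \eqref{hjhjhjhjj} yields a single $\delta_1>0$ with $|\varphi(h,z)-z|\le u(z)$ for all $z\in\RR$ and all $h\in[0,\delta_1]$. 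Writing $z_k:=\varphi(k\delta_1,x_0)$ and using \eqref{semiflows} to get $z_{k+1}=\varphi(\delta_1,z_k)$, a telescoping/induction argument bounds each $z_k$ by the finite quantity $|x_0|+\sum_{j<k}u(z_j)$; since only finitely many steps exhaust $[0,t_0]$ and intermediate times obey $|\varphi(h,z_k)-z_k|\le u(z_k)$, the orbit $\{\varphi(s,x_0):s\in[0,t_0]\}$ is bounded, say inside a compact interval on which $u\le M$.

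Once the orbit is trapped where $u\le M$, I would upgrade boundedness to continuity. For $0\le s\le s'\le t_0$ with $s'-s\le\delta(\varepsilon)$, the identity $\varphi(s',x_0)=\varphi(s'-s,\varphi(s,x_0))$ and \eqref{hjhjhjhjj} give $|\varphi(s',x_0)-\varphi(s,x_0)|\le\varepsilon\cdot u(\varphi(s,x_0))\le\varepsilon M$, so $s\mapsto\varphi(s,x_0)$ is uniformly continuous on $[0,t_0]$, hence continuous on all of $\RR_+$. The same grid argument, run with the continuous map $\varphi(\delta_1,\cdot)$ and the compact set $U:=[x_0-1,x_0+1]$ in place of the single point $x_0$, shows that $\{\varphi(s,x):s\in[0,t_0],\,x\in U\}$ is bounded by some $M'$, a fact needed below.

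For joint continuity at $(t_0,x_0)$ I would split according to the side from which $t$ approaches $t_0$. If $t\ge t_0$, then $\varphi(t,x)=\varphi(t_0,\varphi(t-t_0,x))$; since $|\varphi(t-t_0,x)-x|\le\varepsilon\cdot u(x)$ with $\varepsilon\to0$ and $x\to x_0$, continuity of $u$ gives $\varphi(t-t_0,x)\to x_0$, and second-variable continuity of $\varphi(t_0,\cdot)$ finishes this case. If $t<t_0$, I would set $a:=\varphi(t,x)$, $b:=\varphi(t,x_0)$ and use $\varphi(t_0,x)=\varphi(t_0-t,a)$ and $\varphi(t_0,x_0)=\varphi(t_0-t,b)$ to get $|a-\varphi(t_0,x)|\le\varepsilon u(a)$ and $|b-\varphi(t_0,x_0)|\le\varepsilon u(b)$; as $a,b$ lie in the bounded set above, $u(a),u(b)\le M'$, and the triangle inequality bounds $|a-b|$ by $2\varepsilon M'+|\varphi(t_0,x)-\varphi(t_0,x_0)|$, which is small once $\varepsilon$ is small and $x$ is close to $x_0$ (again by second-variable continuity at the fixed time $t_0$). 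I expect this left-hand case to be the main obstacle, precisely because it forces the uniform boundedness of orbits emanating from a whole neighborhood of $x_0$.

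Finally, for the continuity of $g_{f,s}$, note first that $t\mapsto f(\varphi(t,x))$ is continuous on the compact interval $[0,s]$ by orbit continuity, so the maximum defining $g_{f,s}(x)$ is attained and $g_{f,s}$ is well defined. The elementary inequality $|\max_t F(t)-\max_t G(t)|\le\max_t|F(t)-G(t)|$ reduces continuity of $g_{f,s}$ at $x_0$ to showing $\max_{t\in[0,s]}|f(\varphi(t,x))-f(\varphi(t,x_0))|\to0$ as $x\to x_0$. I would set $\Psi(t,x):=|f(\varphi(t,x))-f(\varphi(t,x_0))|$, which is continuous on $[0,s]\times\RR$ by joint continuity of $\varphi$ and continuity of $f$, and which vanishes identically on the compact slice $[0,s]\times\{x_0\}$. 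Applying the tube lemma to the open set $\{\Psi<\eta\}\supseteq[0,s]\times\{x_0\}$ produces a neighborhood $W$ of $x_0$ with $\Psi<\eta$ on all of $[0,s]\times W$, which is exactly the required uniform-in-$t$ estimate and completes the proof.
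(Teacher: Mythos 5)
Your proof is correct, and it diverges from the paper's argument in both halves in ways worth noting. For joint continuity the paper fixes $t>0$, writes a nearby point as $(t+h,\,x+y)$, decomposes $\varphi(t+h,x+y)-\varphi(t,x)$ through $\varphi(h,\varphi(t,x+y))$, and controls $u$ only along the time-$t$ slice $y\mapsto u(\varphi(t,x+y))$ (its constant $M$), so it never needs your orbit-boundedness iteration; the price is that this decomposition literally requires $h\ge 0$, while the paper invokes it for $h\in[-\sigma,\sigma]$ even though $\varphi(h,\cdot)$ is undefined for $h<0$ and \eqref{hjhjhjhjj} is only stated for $h\ge 0$. Your separate treatment of the approach from the left --- passing through $a=\varphi(t,x)$, $b=\varphi(t,x_0)$ and the factorization $\varphi(t_0,\cdot)=\varphi(t_0-t,\varphi(t,\cdot))$ --- is exactly what is needed to make that case airtight, and it is what forces your preliminary step that $\{\varphi(s,x):s\in[0,t_0],\,x\in U\}$ is bounded, obtained by iterating the continuous map $\varphi(\delta_1,\cdot)$ on a compact neighborhood and using \eqref{hjhjhjhjj} at scale $\varepsilon=1$ for the intermediate times; this is extra work the paper avoids, but it buys a cleaner handling of left limits. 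For the second assertion the routes are genuinely different: the paper proves sequential continuity of $g_{f,s}$ by choosing maximizers $t_n\in[0,s]$ with $g_{f,s}(y_n)=|f(\varphi(t_n,y_n))|$, extracting a convergent subsequence $t_{n_k}\to t^*$ and verifying $g_{f,s}(x)=|f(\varphi(t^*,x))|$ by a two-sided estimate, whereas you use the inequality $|\max_tF-\max_tG|\le\max_t|F-G|$ together with the tube lemma applied to $\Psi(t,x)=|f(\varphi(t,x))-f(\varphi(t,x_0))|$, which vanishes on the compact slice $[0,s]\times\{x_0\}$. Your version is shorter and yields the uniform-in-$t$ estimate directly; both rest on the joint continuity of $(t,x)\mapsto f(\varphi(t,x))$ established in the first half.
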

\begin{proof}
We first show that $\varphi$ is jointly continuous. Fix $ t > 0$, $x \in \RR$ and pick $\varepsilon >0$. Due to the fact that the function $y \mapsto \varphi (t,y)$ is continuous, there exists $0< \delta <\min\{1,t\}$ such that for each $y \in [- \delta, \delta]$ we have $$| \varphi(t,y+x)-\varphi(t,x)| \leq \frac{\varepsilon}{2}. $$
Also, since the function $y \mapsto u(\varphi (t,y+x))$ is continuous $M:=  \max_{y \in [0,1]}u(\varphi(t,y+x))$ is well-defined.
 By (\ref{hjhjhjhjj}), there exists $0<\sigma < \delta$ such that for all $h \in [- \sigma, \sigma]$ and $y \in \RR$ we obtain
$$|\varphi(h,\varphi(t,y+x))-\varphi(t,y+x)|= |\varphi(h,z_y)-z_y|\leq \frac{\varepsilon}{2  M} \cdot u(z_y) =\frac{\varepsilon}{2  M} \cdot u(\varphi(t,y+x)). $$
where $z_y =\varphi(t,y+x)$. Hence, by using (\ref{semiflows}), we conclude
\begin{align*}
|\varphi(h +t,y+x)-\varphi(t,x)| &\leq|\varphi(h,\varphi(t,y+x))-\varphi(t,y+x)|+  |\varphi(t,y+x)-\varphi(t,x)| \\
& \leq \frac{\varepsilon}{2 M} \cdot u(\varphi(t,y+x)) +  \frac{\varepsilon}{2} \leq \varepsilon
\end{align*}
for all $h,y \in [- \sigma, \sigma]$ which proves that $\varphi$ is jointly continuous.

Next, pick  $f \in \C(\RR)$, $s \geq 0$, $x \in \RR$ and $(x_n)_{n \in \NN} \subset \RR$  such that $x_n \rightarrow x$ as $n \rightarrow \infty$. If we can show that each subsequence $(y_n)_{n \in \NN}$ of $(x_n)_{n \in \NN}$ has a subsequence $(y_{n_k})_{n \in \NN}$ such that $g_{f,s}(y_{n_k}) \rightarrow g_{f,s}(x)$ as $k \rightarrow \infty$, then we conclude $g_{f,s}(x_n) \rightarrow g_{f,s}(x)$ as $k \rightarrow \infty$ from where it follows that $g_{f,s}$ is continuous.  To see this, we first notice that $(\tau,y) \mapsto |f(\varphi(\tau,y))|$ is jointly continuous. Hence, there exist $t, t_n \in [0,s]$ such that for each $n \in \NN$ we have \begin{equation}\label{nnnnn}
g_{f,s}(y_n) = |f(\varphi(t_n,y_n))| \quad \text{and} \quad g_{f,s}(x) =|f(\varphi(t,x))|.
\end{equation} 
We choose a converging subsequence $(t_{n_k})_{k \in \NN} $ with limit $t^* \in [0,s]$ and observe that $|f(\varphi(t_{n_k},y_{n_k}))| \rightarrow |f(\varphi(t^*,x))|$ as $k \rightarrow \infty$.  By this observation and (\ref{nnnnn}), it is enough to show that $g_{f,s}(x) = |f(\varphi(t^*,x))|$ to conclude that $g_{f,s}(y_{n_k}) \rightarrow g_{f,s}(x)$ as $k \rightarrow \infty$. First, by definition, we have $g_{f,s}(x)  \geq |f(\varphi(t^*,x))|$. To see that $g_{f,s}(x)  \leq |f(\varphi(t^*,x))|$, fix $\varepsilon>0$ and note that, by joint continuity of $(\tau,y) \mapsto f(\varphi(\tau,y))$ and $(\tau,y) \mapsto |f(\varphi(\tau,y))|$, there exists $K \in \NN$ such that $$|f(\varphi(t,x))- f(\varphi(t,y_{n_K}))| \leq \frac{\varepsilon}{2} \quad \text{and} \quad |f(\varphi(t_{n_K },y_{n_K})|- |f(\varphi(t^*,x))| \leq \frac{\varepsilon}{2} $$ hold, respectively. Hence, by applying (\ref{nnnnn}), we estimate \begin{align*}
|g_{f,s}(x)| & = |f(\varphi(t,x))| \leq |f(\varphi(t,x))- f(\varphi(t,y_{n_K}))| +  |f(\varphi(t,y_{n_K}))| \leq \frac{\varepsilon}{2} + g_{f,s}(y_{n_K}) \\
& \leq \frac{\varepsilon}{2} +|g_{f,s}(y_{n_K})- f(\varphi(t^*,x))| +  |f(\varphi(t^*,x))|\leq    \varepsilon + |f(\varphi(t^*,x))|.
\end{align*}
Since $\varepsilon$ was chosen arbitrarily we conclude $g_{f,s}(x)  \leq |f(\varphi(t^*,x))|$. 
\end{proof}

%\begin{conj}Let $\varphi$ be a (semi)flow such that the operators of the (semi)group $T_\varphi$ preserve $\C_c(\RR)$. Then $T_\varphi$ is relatively uniformly continuous on $\C_c(\RR)$ if and only if $\varphi$ is jointly continuous.\end{conj}

In order to characterize relatively uniformly continuous Koopman semigroups on $\C(\RR)$ we need to consider the space $\mathcal{L}\mathcal{P}\mathcal{A}(\RR)$
of \emph{locally piecewise affine functions on $\RR$}. A function  $f\in \C(\RR)$ is locally piecewise affine if there exist sequences $(a_n)_{n \in \ZZ}, (b_n)_{n \in \ZZ},(j_n)_{n \in \ZZ}\subset \RR$
such that $\bigcup_{n \in \ZZ}[j_n, j_{n+1}] = \RR$ and 
\begin{enumerate}
\item [(1)] $j_n<j_{n+1}$,
\item [(2)] $f(x)=a_n \cdot x + b_n$ holds for all $x \in [j_n, j_{n+1}] $, and
\item [(3)] $b_{n-1}-b_n=(a_n-a_{n-1})j_n$
\end{enumerate}
hold for all $n \in \ZZ$.
It is easy to see that our definition is equivalent to the definition of $\mathcal{L}\mathcal{P}\mathcal{A}(\RR^N)$ from  \cite{Adeeb:17} in the case when $N=1$. \cite[Theorem 4.1]{Adeeb:17} yields that $\mathcal L \mathcal P\mathcal A(\RR)$ is ru-dense in $\C(\RR)$.
%\Cref{ruc. translation semigroup on C(R)} will surve as an application of \Cref{ruc on dense to ruc on whole space}, but first we show the following useful result.

\begin{prop}\label{3}
For a semiflow $\varphi$ the following assertions are equivalent.
\begin{enumerate}
\item [\textup{(i)}] $\mathcal{T}_\varphi$ is relatively uniformly continuous on $\C(\RR)$.
\item [\textup{(ii)}] There exists $u \in \C(\RR)$ such that for each $\varepsilon >0$ there exists $\delta >0$ such that
$$|\varphi(t,x)-x| \leq \varepsilon \cdot u(x)$$
holds for all $t \in [0, \delta]$ and $x \in \RR$.
\item [\textup{(iii)}] For each $f \in \mathcal{L}\mathcal{P}\mathcal{A}(\RR)$ there exists $v \in \mathcal{LPA}(\RR)$ such that $T_\varphi(h)f \goesru f$ with respect to $v$ as $h \searrow 0$.
\end{enumerate}
\end{prop}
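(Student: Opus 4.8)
The plan is to establish the cycle $\text{(i)}\Rightarrow\text{(iii)}\Rightarrow\text{(ii)}\Rightarrow\text{(i)}$. Throughout I would use that $\mathcal{T}_\varphi$ is a \emph{positive} semigroup on $\C(\RR)$: positivity is immediate because each $T_\varphi(t)$ is a composition operator, and the semigroup law $T_\varphi(t)T_\varphi(s)=T_\varphi(s+t)$ follows from \eqref{semiflows}. I would also use that $\C(\RR)$ has property $(D)$, so that \Cref{ruc on dense to ruc on whole space} applies.

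The two short implications I would dispatch first. For $\text{(iii)}\Rightarrow\text{(ii)}$ I test (iii) on the identity $\mathrm{id}\colon x\mapsto x$, which lies in $\mathcal{LPA}(\RR)$; since $(T_\varphi(h)\,\mathrm{id})(x)=\varphi(h,x)$, the convergence $T_\varphi(h)\,\mathrm{id}\goesru\mathrm{id}$ with regulator $v$ is literally the pointwise estimate $|\varphi(h,x)-x|\le\varepsilon\,v(x)$, so $u:=v$ yields (ii). For $\text{(i)}\Rightarrow\text{(iii)}$, relative uniform continuity already provides, for each $f\in\mathcal{LPA}(\RR)$, some regulator $v_0\in\C(\RR)$ of $T_\varphi(h)f\goesru f$; the only gap is that (iii) wants the regulator inside $\mathcal{LPA}(\RR)$, which I close by replacing $v_0$ with a continuous piecewise affine majorant $v\ge|v_0|$ (e.g.\ interpolate linearly between values exceeding $\max_{[n,n+1]}|v_0|$ at integer nodes; every such function lies in $\mathcal{LPA}(\RR)$, since condition (3) is exactly continuity at the nodes). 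A majorant of a regulator is again a regulator, so $v$ works.

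The substance is $\text{(ii)}\Rightarrow\text{(i)}$, which I would run through \Cref{ruc on dense to ruc on whole space}: since $\C(\RR)$ has property $(D)$, it suffices to verify its two conditions. The order-boundedness condition is handed to me by \Cref{thy lemma}, whose hypothesis \eqref{hjhjhjhjj} is precisely statement (ii) here; its conclusion gives the continuous function $g_{f,s}(x)=\max_{t\in[0,s]}|f(\varphi(t,x))|$, which pointwise dominates every $|T_\varphi(t)f|$ with $t\in[0,s]$ and is therefore the required order bound in $\C(\RR)$. For the density condition I would take the $ru$-dense set $D=\mathcal{LPA}(\RR)$ (\cite[Theorem 4.1]{Adeeb:17}) and prove $T_\varphi(h)f\goesru f$ as $h\searrow 0$ for each $f\in\mathcal{LPA}(\RR)$.

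The key estimate, and the main obstacle, is a \emph{position-dependent Lipschitz bound}, needed because a locally piecewise affine $f$ has no global Lipschitz constant. For $x\in\RR$ let $L(x)$ be the largest absolute slope of $f$ on $[x-u(x),\,x+u(x)]$; since $u$ is continuous and the breakpoints $j_n$ of $f$ tend to $\pm\infty$, only finitely many affine pieces meet this interval as $x$ ranges over a compact set, so $L$ is bounded on compacts and admits a continuous majorant $\widetilde L\ge L$. I claim $v:=\widetilde L\cdot u\in\C(\RR)$ is a regulator. Indeed, for $\varepsilon\in(0,1]$ condition (ii) gives $\delta>0$ with $|\varphi(h,x)-x|\le\varepsilon\,u(x)\le u(x)$ for all $h\in[0,\delta]$; then the segment joining $x$ to $\varphi(h,x)$ stays in $[x-u(x),\,x+u(x)]$, where $f$ is $L(x)$-Lipschitz, whence
$$|(T_\varphi(h)f)(x)-f(x)|=|f(\varphi(h,x))-f(x)|\le L(x)\,|\varphi(h,x)-x|\le\varepsilon\,\widetilde L(x)\,u(x)=\varepsilon\,v(x),$$
and the case $\varepsilon>1$ follows from $\varepsilon=1$. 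This verifies the density condition, so \Cref{ruc on dense to ruc on whole space} gives (i) and closes the cycle. The delicate point is exactly packaging the varying slope of $f$ into a genuinely continuous regulator while keeping the displacement $\varphi(h,x)-x$ confined to the interval on which that slope controls $f$.
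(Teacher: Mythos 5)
Your proof is correct, but it reorganizes the cycle in a way that genuinely departs from the paper. The paper proves (i)$\Rightarrow$(ii) by testing on the identity, then does the heavy lifting in (ii)$\Rightarrow$(iii): it builds an explicit regulator $v\in\mathcal{LPA}(\RR)$ from the slopes of the pieces of $f$ adjacent to each breakpoint interval (the constants $\delta_n, M_n, c_n, d_n$), and because that regulator only ``sees'' the neighbouring pieces it must split into two cases, handling large displacements $|\varphi(h,x)-x|\geq\delta_n$ by iterating the flow in time steps $s_n$ and telescoping. You instead run (i)$\Rightarrow$(iii)$\Rightarrow$(ii)$\Rightarrow$(i): your (iii)$\Rightarrow$(ii) is the same identity-function test (valid since $\mathrm{id}\in\mathcal{LPA}(\RR)$), and your (ii)$\Rightarrow$(i) replaces the paper's construction by the position-dependent Lipschitz bound $L(x)$ taken over the whole window $[x-u(x),x+u(x)]$. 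Since for $\varepsilon\leq 1$ the displacement is confined to that window, $L(x)$ automatically accounts for every piece the flow can reach, so no case split or iteration is needed; the price is that your regulator $\widetilde L\cdot u$ is merely continuous rather than locally piecewise affine, which is harmless for invoking \Cref{ruc on dense to ruc on whole space} (together with \Cref{thy lemma} for order-boundedness, exactly as the paper does in (iii)$\Rightarrow$(i)) but forces you to recover statement (iii) separately from (i) via the piecewise-affine majorization of an arbitrary continuous regulator. The small facts you rely on all check out: $u\geq 0$ is forced by the inequality in (ii), $j_n\to\pm\infty$ makes $L$ bounded on compacta, and a locally bounded function on $\RR$ admits a continuous (indeed piecewise affine) majorant by interpolating suprema over $[n-1,n+1]$ at integer nodes. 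Net effect: your route is shorter and avoids the paper's most delicate computation, while the paper's route produces the $\mathcal{LPA}$ regulator of (iii) explicitly from the data of $f$ and $u$.
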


\begin{proof}
(i)$\Rightarrow$(ii)  By assumption, there exists $u \in X_+$ such that for each $\varepsilon >0$ there exists $\delta >0$ such that
$$|\varphi(t,\cdot)-\Id_\RR| =|T_\varphi(t)\Id_\RR-\Id_\RR| \leq \varepsilon \cdot u$$
holds for all $t \in [0, \delta]$.

(ii)$\Rightarrow$(iii) Fix $f \in \mathcal{L}\mathcal{P}\mathcal{A}(\RR)$ and pick sequences $(a_n)_{n \in \ZZ}, (b_n)_{n \in \ZZ}, (j_n)_{n \in \ZZ}\subset \RR$ from  the characterization of functions in $\mathcal{L}\mathcal{P}\mathcal{A}(\RR)$. Our goal is to construct a function $v \in \mathcal{LPA}(\RR)$ such that it regulates $T_\varphi(h)f\goesru f$ as $h\searrow 0$. To this end, pick $n\in\ZZ$, set $$\delta_n:=\min \left \{\frac{j_{i+1}-j_{i}}{2}: n-1 \leq i \leq n+1 \right \}$$
 and $M_n:=\max_{x \in [j_n, j_{n+1}]} u(x)$. By assumption, there exists $s_n >0$ such that \begin{equation}\label{464646}
 |\varphi (t,x)-x| \leq \frac{\delta_n}{\max\{M_n,1\}} \cdot u(x) \leq \delta_n
 \end{equation}
holds for all $x \in [j_n, j_{n+1}]$ and $t \in [0,s_n]$. Set
\begin{align*}
c_n&:= {s_n}^{-1} \cdot\sup_{(t,x) \in [0,1] \times  [j_n,j_{n+1}]}|f(\varphi (s_n+t,x))-f(\varphi (s_n,x))|, \\
d_n&:= \max\{M_n,1\} \cdot \max_{n-1 \leq i \leq n +1} \left\{|a_{i}|, |a_{i}-a_n|, c_i \right\} 
\end{align*}
and finally define the function $v\colon \mathbb R\to \mathbb R$ on each interval $[j_n,j_{n+1}]$ separately; if $x\in [j_n,j_{n+1}]$ we define
$$v(x)=(d_{n+1}-d_{n}) \frac{x-j_{n}}{j_{n+1}-j_{n}} + d_n.$$
%$$
%v(x)=\left\{\begin{array}{lcl}
%d_n, & x \in  [j_n, j_{n+1}- \delta_n],\\
%(d_{n+1}-d_{n}) \frac{x-(j_{n+1}- \delta_n)}{\delta_n} + d_n, & \ \ x \in [j_{n+1}- \delta_n, j_{n+1}].
%\end{array}\right.
%$$
A direct verification shows that $v \in \mathcal{LPA}(\RR).$ We claim that the function $v$ regulates $T_\varphi(h)f\goesru f$ as $h\searrow 0.$ To this end, choose $0<\varepsilon<1$. By assumption, there exists $0< \delta < \dfrac{\varepsilon}{2}$ such that $$|\varphi(h,x)-x| \leq \dfrac{\varepsilon}{4} \cdot u(x)$$ holds for all $h \in (0,\delta]$ and $x \in \RR$. We will prove that for each  $n\in\ZZ$, $x \in [j_n,j_{n+1}]$ and $h \in (0, \delta]$ we have $|f(\varphi(h,x))-f(x)|\leq \varepsilon \cdot v(x)$.

%set $\delta:=\frac{\varepsilon}{3}$ and pick $h \in (0,\delta] $. If we can prove that $|T_l(h)f-f| \leq \varepsilon \cdot u$, then, by \Cref{relatively uniformly continuous semigroup 0}, the semigroup $(T_l(t))_{t\geq 0}$ is relatively uniformly continuous on $\mathcal{L}\mathcal{P}\mathcal{A}(\RR)$. \\
\underline{Case 1}: Assume that $|\varphi(h,x)-x|<\delta_n$. Then $\varphi(h,x) \in [j_i,j_{i+1}]$ for some $n-1 \leq i \leq n+1$ and hence, we estimate
\begin{align*}
|f(\varphi(h,x))-f(x)|&=|a_i\varphi(h,x) + b_i- (a_n x + b_n)| \leq |\varphi(h,x)-x| \cdot |a_i| + |(a_i-a_n) x- (b_n-b_i)|\\
& \leq \dfrac{\varepsilon}{4} \cdot   u(x) \cdot |a_i| + |a_i-a_n| \cdot | x- j_i| .
\end{align*}
Hence, if $\varphi(h,x) \in [j_n,j_{n+1}]$, i.e. $i=n$, then we obtain $$|f(\varphi(h,x))-f(x)| \leq \dfrac{\varepsilon}{4} \cdot   u(x) \cdot |a_n| \leq \dfrac{\varepsilon}{4} \cdot M_n\cdot |a_n| \leq \dfrac{\varepsilon}{4} \cdot \min\{ d_n, d_{n+1}\} \leq \dfrac{\varepsilon}{4} \cdot v(x).$$
Furthermore, if $\varphi(h,x) \in [j_{n+1},j_{n+2}]$, i.e. $i=n+1$,  then $ j_{n+1}-x < \varphi(h,x)-x \leq \dfrac{\varepsilon}{4} \cdot   u(x) $, from where we conclude 
\begin{align*}
|f(\varphi(h,x))-f(x)| &\leq  \dfrac{\varepsilon}{4} \cdot   u(x) \cdot |a_{n+1}| + | a_{n+1}- a_{n}| \cdot  (j_{n+1}-x) \\
&\leq \dfrac{\varepsilon}{4} \cdot M_n\cdot (|a_{n+1}|+ | a_{n+1}- a_{n}|) 
 \leq \dfrac{\varepsilon}{2} \cdot \min\{ d_n, d_{n+1}\} \leq \dfrac{\varepsilon}{2} \cdot v(x).
\end{align*}
Similarly, we argue when $\varphi(h,x) \in [j_{n-1},j_{n}]$. 

\underline{Case 2}: Assume that $|\varphi(h,x)-x| \geq \delta_n$. By (\ref{464646}), there exists $0< s_n <h$ such that $|\varphi(t,x)-x| \leq \delta_n$ holds for all $t \in [0,s_n]$. We write $h= N_n s_n +r_n$ for some $N_n \in \NN$ and $0 \leq r_n < s_n$. Case 1 yields $|f(\varphi(r_n,x))-f(x)| \leq \dfrac{  \varepsilon}{2} \cdot v(x)$. By an easy application of the triangle inequality we can estimate
$$|f(\varphi(h,x))-f(x)| \leq \sum_{m=1}^{N_n}|f(\varphi(m s_n +r_n,x))-f(\varphi((m-1) s_n +r_n,x))|+ |f(\varphi(r_n,x))-f(x)|.$$
For each $1\leq m\leq N_n$ we denote $t_m:=(m-1) s_n +r_n$. Since  $\varepsilon<1$, also $h<1$ and hence, $ y_m \in [0,1]$. By definition of the number $c_n$ and the estimate $c_n \leq \min\{d_n, d_{n+1}\} \leq v(x)$, we obtain
\begin{align*}
|f(\varphi(h,x))-f(x)|&\leq \sum_{m=1}^{N_n}|f(\varphi(s_n +t_m,x))-f(\varphi(t_m,x))|+|f(\varphi(r_n,x))-f(x)|\\
&\leq N_n  s_n \cdot c_n+ \dfrac{  \varepsilon}{2} \cdot v(x)\leq \varepsilon\cdot v(x).
\end{align*}
% Since
%$$j_n\leq x\leq y=(m-1)\delta_n+r_n+x\leq N_n\delta_n+r_n+x\leq h+x\leq 1+j_{n+1},$$ we conclude
%\begin{align*}
%|f(h+x)-f(x)|&\leq N_n\delta_nc_n+2\delta_n u(x)\leq hd_n+2hu(x)\leq \varepsilon \cdot u(x).
%\end{align*}
% %\leq &N_n \cdot \sup_{y \in [0,1+j_{n+1}-j_n]} |f(\delta_n +y)-f(y)| +2 r_n \cdot u(x)\\
%%\leq &N_n \delta_n \cdot c_n + 2\delta_n \cdot u(x)\leq h \cdot d_n+ 2h \cdot u(x) \leq \varepsilon \cdot u(x)
%%\end{align*}
%for all $x \in [j_n , j_{n+1}]$.
%This proves that $|T_l(h)f-f| \leq \varepsilon \cdot u$ and hence the result.

(iii)$\Rightarrow$(i)  The vector lattice $\C(\RR)$ equipped with the topology of uniform convergence on compact sets is a completely metrizable locally solid vector lattice and hence, by \Cref{implies property $(D)$}, it has property $(D)$. Hence, by \Cref{ruc on dense to ruc on whole space}, it is enough to check  the assertions that there exists an $ru$-dense subset $D \subset \C(\RR)$ such that $T(h)g \goesru g$ as $h \searrow 0$ for each $g \in D$ and that for each $ s \geq 0$ and $f \in \C(\RR)$ there exists $u \in \C(\RR)$ such that  $|T(t)f| \leq u$ holds for all $t \in [0,s]$. The first assertion follows directly from the  assumption and  \cite[Theorem 4.1]{Adeeb:17} which yields that $\mathcal L \mathcal P\mathcal A(\RR)$ is ru-dense in $\C(\RR)$. Pick $f \in \C(\RR)$ and $s \geq 0$.  Using the same argument as in (i)$ \Rightarrow$(ii), we see that the semiflow $\varphi$ satisfies (ii) and hence, by \Cref{thy lemma}, the function $x \mapsto g_{f,s}(x):= \max_{t \in [0,s]}|f(\varphi(t,x))|$ is continuous. We conclude the proof by noting that $|T_\varphi(t)f| \leq g_{f,s}$ holds for all $t \in [0,s]$. 
\end{proof}

The following proposition characterizes relatively uniformly continuous Koopman semigroups on $\Lip(\RR)$ and $\UC(\RR)$ through their semiflows.
%\begin{rem}\label{invariant+Lipschitz-continuous}If the semiflow $\varphi$ is Lipschitz-continuous, then Lip$(\RR)$ is invariant under the operators of $T_\varphi$.\end{rem}

\begin{prop}\label{2}
Let $X=  \Lip(\RR)$ or $X=\UC(\RR)$ and $\varphi$ be a semiflow such that the operators of the (semi)group $\mathcal{T}_\varphi$ leave $X$ invariant. The following assertions are equivalent.
\begin{enumerate}
\item [\textup{(i)}] $\mathcal{T}_\varphi$ is relatively uniformly continuous on $X$.
\item [\textup{(ii)}] There exists $u \in X$ such that for each $\varepsilon >0$ there exists $\delta >0$ such that
$$|\varphi(h,x)-x| \leq \varepsilon \cdot u(x)$$
holds for all $h \in [0, \delta]$ and $x \in \RR$.
\item [\textup{(iii)}] For each $\varepsilon >0$ there exists $\delta >0$ such that
$$|\varphi(h,x)-x| \leq \varepsilon \cdot (1+|x|)$$
holds for all $h \in [0, \delta]$ and $x \in \RR$.
\end{enumerate}
\end{prop}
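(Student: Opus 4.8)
The plan is to prove the cycle $\mathrm{(i)}\Rightarrow\mathrm{(ii)}\Rightarrow\mathrm{(iii)}\Rightarrow\mathrm{(i)}$, using throughout that, by \Cref{1}, the function $u_0\colon x\mapsto 1+|x|$ is an order unit of both $\Lip(\RR)$ and $\UC(\RR)$, so that every element of $X$ is dominated by a scalar multiple of $u_0$. I first record that $\mathcal T_\varphi$ is a positive semigroup on $X$: invariance of $X$ is assumed, while positivity and the semigroup law follow from composition with $\varphi$ together with \eqref{semiflows}. This is exactly what allows me, at the end, to reduce full relative uniform continuity to continuity at $t=0$ on positive vectors via \Cref{relatively uniformly continuous semigroup 0}.

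For $\mathrm{(i)}\Rightarrow\mathrm{(ii)}$ I argue as in \Cref{3}, applying relative uniform continuity to $\Id_\RR\in X$: since $(T_\varphi(h)\Id_\RR)(x)=\varphi(h,x)$, the convergence $T_\varphi(h)\Id_\RR\goesru\Id_\RR$ as $h\searrow 0$ furnishes a regulator $u\in X_+$ and, for each $\varepsilon>0$, a $\delta>0$ with $|\varphi(h,x)-x|\le\varepsilon\,u(x)$ for all $x\in\RR$ and $h\in[0,\delta]$, which is precisely (ii). For $\mathrm{(ii)}\Rightarrow\mathrm{(iii)}$ I invoke the order unit: choosing $\lambda>0$ with $|u|\le\lambda u_0$ and applying (ii) with parameter $\varepsilon/\lambda$ yields $|\varphi(h,x)-x|\le\varepsilon\,u_0(x)=\varepsilon(1+|x|)$. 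The reverse passage $\mathrm{(iii)}\Rightarrow\mathrm{(ii)}$ is immediate with $u=u_0$, so (ii) and (iii) are in fact interchangeable.

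The substantive direction is $\mathrm{(iii)}\Rightarrow\mathrm{(i)}$, which I will establish by showing $T_\varphi(h)f\goesru f$ as $h\searrow 0$ for \emph{every} $f\in X$, with the single regulator $u_0$, and then applying \Cref{relatively uniformly continuous semigroup 0}. When $X=\Lip(\RR)$ this is direct: if $f$ has Lipschitz constant $K_f>0$ (the constant case being trivial), then $|f(\varphi(h,x))-f(x)|\le K_f|\varphi(h,x)-x|$, and feeding $\varepsilon/K_f$ into (iii) gives $|T_\varphi(h)f-f|\le\varepsilon\,u_0$. The delicate case is $X=\UC(\RR)$, where no global Lipschitz bound is available; here I lean on the quantitative modulus estimate \eqref{delta} from \Cref{1}. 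Given $\varepsilon>0$, I first pick $\sigma>0$ so that $|f(z)-f(w)|\le\tfrac{\varepsilon}{2}(|z-w|\sigma^{-1}+1)$ for all $z,w\in\RR$, then apply (iii) with parameter $\sigma$ to obtain $\delta>0$ with $|\varphi(h,x)-x|\le\sigma(1+|x|)$ for $h\in[0,\delta]$; substituting $z=\varphi(h,x)$, $w=x$ and using $1\le 1+|x|$ collapses the bound to $|f(\varphi(h,x))-f(x)|\le\varepsilon(1+|x|)$.

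Thus the main obstacle is the $\varepsilon$--$\delta$ bookkeeping in the $\UC$ case, where the point is to calibrate the modulus parameter $\sigma$ coming from \Cref{1} against the displacement bound supplied by (iii), so that the \emph{same} regulator $u_0=1+|x|$ serves uniformly over all $h\in[0,\delta]$. Once $T_\varphi(h)f\goesru f$ is secured for all $f\in X$, and in particular for $f\in X_+$, \Cref{relatively uniformly continuous semigroup 0} upgrades this continuity at the origin to relative uniform continuity of $\mathcal T_\varphi$ on $X$, closing the cycle.
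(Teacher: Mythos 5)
Your proposal is correct and follows essentially the same route as the paper: (i)$\Rightarrow$(ii) by applying relative uniform continuity to $\Id_\RR$, (ii)$\Leftrightarrow$(iii) via the order unit $x\mapsto 1+|x|$ from \Cref{1}, and (iii)$\Rightarrow$(i) by combining the modulus estimate \eqref{delta} with the displacement bound and invoking \Cref{relatively uniformly continuous semigroup 0}. The only cosmetic difference is that you treat $\Lip(\RR)$ separately via the Lipschitz constant, whereas the paper handles both spaces uniformly through \Cref{1} and accepts the regulator $x\mapsto 2+|x|$ instead of tuning constants to land exactly on $1+|x|$.
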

\begin{proof}
(i)$\Rightarrow$(ii) follows from the same argument as in the proof of (i)$\Rightarrow$(ii) of \Cref{3}.

(ii)$\Rightarrow$(iii) follows from the fact that the function $x \mapsto 1+|x|$ is an order unit in $X$.

(iii)$\Rightarrow$(i) Fix $f \in X$ and $ \varepsilon>0$. By \Cref{1}, we can find some $\delta >0$ such that $$|f(x)-f(y)| \leq \varepsilon \cdot (|x-y|\cdot \delta^{-1}+1)$$ for all $x,y \in \RR$. By assumption, there exists $\sigma >0$ such that $|\varphi(h,x)-x| \leq \delta \cdot (1+|x|)$ holds for all $h \in [0, \sigma]$ and $x \in \RR$. Now it easily follows that
\begin{align*}\label{ucf}
|(T_\varphi(h)f)(x)-f(x)| &=|f(\varphi(h,x))-f(x)| \leq \varepsilon \cdot (|\varphi(h,x)-x| \cdot \delta^{-1} +1)  \leq \varepsilon \cdot (2+|x|)
\end{align*}
holds for all $h \in [0, \sigma]$ and $x \in \RR$. We conclude the proof by noting that  $x \mapsto 2+|x|$ is in $X$ and by applying \Cref{relatively uniformly continuous semigroup 0}.
\end{proof}
\begin{rem}For $X=  \Lip(\RR)$ or $X=\UC(\RR)$ and a semiflow $\varphi$ one can easily verify that the operators of $\mathcal{T}_\varphi$ leave $X$ invariant if and only if  for each $t \geq 0$ the mapping $x \mapsto \varphi(t,x)$ is in $X$.
\end{rem}
For the semiflow $(t,x) \mapsto t+x$, Propositions \ref{translation semigroup on Cc}, \ref{3}, and \ref{2} yield that the (left) shift semigroup is relatively uniformly continuous on $\C_c(\RR)$, $\C(\RR)$, $\Lip(\RR)$, and $\UC(\RR)$, respectively.

%\begin{conj}
%Let $\varphi$ be a (semi)flow such that the operators of the (semi)group $T_\varphi$ preserve $\C_0(\RR)$. The following assertions are equivalent.
%\begin{enumerate}
%\item [\textup{(i)}] $T_\varphi$ is ruc on $\C_0(\RR)$.
%\item [\textup{(ii)}] $\varphi$ is jointly continuous.
%\item [\textup{(iii)}] $T_\varphi$ is ruc on $\C_c(\RR)$.
%\end{enumerate}
%\end{conj}

\section*{Acknowledgments and further remarks}
The results of this paper are used in \cite{Kaplin:18} which is the continuation of our investigation of relatively uniformly continuous semigroups.

The authors would like to thank Marjeta Kramar Fijav\v z for fruitful discussions and comments that improved the manuscript. The second author would also like to thank Marjeta for scientific guidance.

\bibliographystyle{alpha}
\bibliography{literature}

%\begin{thm}
%Let $X$ be a normed lattice and $\widehat X$ its norm completion. Let $(T(t))_{t\geq 0}$ be an relatively uniformly continuous semigroup of positive bounded operators on $X$. Suppose for each $x\in X_+$ there are $t_0>0$ and $u\in X_+$ such that $\{T(t)x:\; 0\leq t\leq t_0\}\subseteq [0,u].$ Then
%there exists a unique extension of $(T(t))_{t\geq 0}$ to an relatively uniformly continuous semigroup positive semigroup on $\widehat X$.
%\end{thm}
%
%\begin{proof}
%Since $T(t)$ is bounded on $X$, it has a unique bounded extension $S(t)$ on $\widehat X$ with $\|S(t)\|=\|T(t)\|.$ Since $T(t)$ is positive and $X$ is a dense sublattice in $\widehat T$, $S(t)$ is positive as well.
%Since $S(t_1t_2)|_X=T(t_1t_2)=T(t_1)T(t_2)=S(t_1)|_X S(t_2)|_X$ and $X$ is dense in $\widehat X$ we have that $S(t)_{t\geq 0}$ is a semigroup on $\widehat X$.
%
%We claim that $(S(t))_{t\geq 0}$ is relatively uniformly continuous semigroup on $\widehat X$.
%To this end, choose $x\geq 0$ in $\widehat X$ and $\varepsilon>0$. There is $y\geq 0$ in $X$ such that $\|x-y\|<\varepsilon.$
%\begin{align*}
%|T(t)x-x|&\leq |T(t)(x-y)|+|T(t)y-y|+|x-y|.
%\end{align*}
%\end{proof}

\end{document}